\documentclass[12pt,twoside]{amsart}
\usepackage[dvips]{graphics}
\usepackage[all]{xy}
\usepackage{a4,amsmath,amssymb,amsfonts,amscd,mathrsfs}
\parindent0mm
\pagestyle{headings}

\sloppy

\reversemarginpar
\marginparsep0.5cm
\marginparwidth2cm


\newcounter{Abschnitt}[section]

\newcommand{\bib}[6]{{\bibitem{#2} #3: {\emph{#4},} #5#6.}}

\newtheorem{theorem}[subsection]{Theorem}
\newtheorem{lemma}[subsection]{Lemma}
\newtheorem{corollary}[subsection]{Corollary}
\newtheorem{proposition}[subsection]{Proposition}

\newcommand{\CH}{{\rm CH}}

\newcommand{\Coh}{\mathrm{Coh}}

\newcommand{\End}{{\rm End}}
\newcommand{\Ext}{{\rm Ext}}

\newcommand{\Hom}{{\rm Hom}}

\newcommand{\Pic}{{\rm Pic}}

\newcommand{\SL}{{\rm SL}}

\newcommand{\Spec}{{\rm Spec}}

\newcommand{\U}{{\rm U}}

\newcommand{\ch}{{\rm ch}}

\newcommand{\eps}{\varepsilon}

\newcommand{\gr}{{\rm gr}}
\newcommand{\id}{{\rm id}}

\renewcommand{\phi}{\varphi}
\newcommand{\pr}{{\rm pr}}

\newcommand{\pt}{{\rm pt}}
\newcommand{\rk}{{\rm rk}}

\newcommand{\Ecal}{{\mathcal E}}

\newcommand{\Ocal}{{\mathcal O}}

\newcommand{\ndop}{{\mathbb N}}

\newcommand{\pdopi}{{{\mathbb P}^1}}
\newcommand{\pdop}{{\mathbb P}}
\newcommand{\qdop}{{\mathbb Q}}

\newcommand{\ddual}{^{\lor \lor}}

\newcommand{\inv}{^{-1}}
\newcommand{\rar}{\longrightarrow}

\newcommand{\rarpa}[1]{\stackrel{#1}{\longrightarrow}}

\newcommand{\larpa}[1]{\stackrel{#1}{\leftarrow}}

\author{Georg Hein}
\email{georg.hein@uni-due.de}
\author{Thang Quyet Truong}
\email{algebraic.geometry.2011@gmail.com}
\address{Fakult\"at f\"ur Mathematik, Universit\"at Duisburg-Essen,
45117 Essen, Germany}
\title{Theta functions for Holomorphic triples}
\date{February 8, 2017}
\begin{document}

\maketitle

\begin{abstract}
  We introduce an generalization of the theta divisor to the theory of
  holomorphic triples on a smooth projective curve $X$.
  We show that a given triple $T=(E_1 \to E_0)$
  is $\alpha$-semistable iff there exists an orthogonal tripe $S=(F_1 \to
  F_0)$ with given numerical invariants.
  This yields globally generated theta line bundles on the moduli space
  of semistable triples.
\end{abstract}

\section{Introduction}
We fix a smooth projective curve $X$ of genus $g$ over an algebraically closed
ground field $k$.
When investigating the coarse moduli space $\U_X(r,d)$ of
S-equivalence classes of semistable bundles on $X$, an ample Cartier divisor
which allows a geometric interpretation facilitates its study.
Drezet and Narasimhan defined in \cite{DN} with the generalized theta
line bundle $\Ocal_{\U_X(r,d)}(\Theta)$ such a line bundle.
The generalized theta divisor has nice sections $\theta_F$ associated to
vector bundles $F$ of slope $\mu(F)=(g-1)-\frac{d}{r}$ with vanishing divisor
$\Theta_F$.
The $k$ points of this divisor are
\[ \Theta_F(k) = \left\{ [E ]\in \U_X(r,d)(k) \, | \, \mbox{ such that }
H^*(X, E \otimes F) \ne 0 \, \right\} \, .\]
The result of Faltings' about the
existence of  {\em orthogonal objects} is basic for the theory of the generalized theta
divisor:

\begin{theorem}\label{Fal-thm}(Theorem \cite[Theorem 1.2]{Fal})
  For a vector bundle $E$ on $X$ we have the following equivalence:
  \[ E \mbox{ is semistable} \iff  H^*(X, E \otimes F) =0 \mbox{ for a vector
  bundle } F \ne 0 \, .\]
\end{theorem}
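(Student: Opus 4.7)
The implication ``$\Leftarrow$'' reduces to Riemann-Roch. Assume $F \ne 0$ satisfies $H^*(X, E \otimes F) = 0$, so that $\chi(E \otimes F) = 0$ and hence $\mu(E) + \mu(F) = g - 1$. If $E$ were not semistable, pick a subsheaf $E' \subset E$ with $\mu(E') > \mu(E)$; since $X$ is a smooth curve, $E'$ is locally free, so $E' \otimes F$ embeds into $E \otimes F$. Then $\mu(E' \otimes F) > g - 1$ forces $\chi(E' \otimes F) > 0$, whence $H^0(E' \otimes F) \ne 0$, and this group injects into $H^0(E \otimes F) = 0$, a contradiction.

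For the harder direction ``$\Rightarrow$'' I would first normalize the problem: fix numerical invariants $(\rk F, \deg F)$ with $\mu(F) = g - 1 - \mu(E)$, so that $\chi(E \otimes F) = 0$ and it suffices to produce $F$ with $H^0(E \otimes F) = 0$ --- the vanishing of $H^1$ is then automatic. My plan is to parameterize candidate bundles on a variety $T$, for example via $F_t := \ker\bigl(\varphi_t \colon L_1^{\oplus a} \twoheadrightarrow L_2^{\oplus b}\bigr)$ for suitable line bundles $L_1, L_2$, integers $a, b$ realizing the required rank and degree, and $\varphi_t$ a general surjection. Semicontinuity makes the \emph{theta locus} $\Theta_E := \{ t \in T : H^0(E \otimes F_t) \ne 0 \}$ closed, and the task reduces to showing $\Theta_E \subsetneq T$. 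Dualizing, a non-zero section of $E \otimes F_t$ corresponds to a non-zero morphism $F_t\dual \to E$ whose image is a subsheaf of $E$ and, by semistability, has slope at most $\mu(E)$.

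\textbf{The main obstacle} is turning this slope bound into an effective dimension estimate. I would exploit the boundedness of sub-bundles of $E$ of bounded slope and rank to stratify the locus in $T$ admitting a morphism $F_t\dual \to E$ by the numerical type $(r', d')$ of the image, and then compare, for each stratum, the dimension of the associated Quot/Hom scheme against $\dim T$. The aim is to show every stratum has positive codimension, so that a general $t$ lies outside $\Theta_E$ and produces the desired orthogonal bundle. Uniformly controlling these dimensions across all possible numerical types of image subsheaves is the technical heart of Faltings' argument, and is the step in which semistability on a smooth curve must be used in its full strength.
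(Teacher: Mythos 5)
The paper does not prove this statement at all: it is quoted from Faltings \cite[Theorem 1.2]{Fal} and used as a black box (it is the input to Proposition \ref{thmA2}), so there is no in-paper argument to compare yours against. Your direction ``$\Leftarrow$'' is complete and correct, and is the standard Riemann--Roch argument; it is essentially the same computation the authors carry out later in Corollary \ref{thmA1} to deduce semistability from orthogonality of triples.

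The direction ``$\Rightarrow$'', however, is a programme rather than a proof, and this is where the entire content of the theorem lies. Everything after ``My plan is'' names steps without carrying them out: you do not specify $L_1$, $L_2$, $a$, $b$ or the parameter variety $T$, you do not compute $\dim T$, and you do not establish the codimension bound for a single stratum of $\Theta_E$. The estimate you would need --- that for every numerical type $(r',d')$ of a saturated image of a nonzero map $F_t\dual \to E$ with $d'/r' \le \mu(E)$, the locus of $t \in T$ admitting such a map has dimension strictly less than $\dim T$ --- is precisely the theorem; declaring it ``the technical heart of Faltings' argument'' does not discharge it, and it is exactly at this point that naive counts can fail (the Hom spaces $\Hom(F_t\dual, E')$ can jump, and one must control $\dim\Hom$ and the dimension of the family of subsheaves $E'$ simultaneously and uniformly in $(r',d')$). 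Note also that your outline is closer in spirit to Popa's effective refinement via Quot-scheme dimension estimates \cite{Popa} than to Faltings' original argument. As it stands, the hard implication is asserted, not proved.
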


This result is a key step in showing that the generalized theta line bundle
is ample.
Indeed, it shows that a certain power of the generalized theta line bundle is
globally generated.
Furthermore, as shown by Popa, the rank and the determinant of $F$ can be fixed a priori:

\begin{theorem}\label{Pop-thm}(Theorem \cite[Theorem 5.3]{Popa})
  For a vector bundle $E$ of rank $r$ and degree $d$ on $X$ we have the following equivalence
  for any fixed line bundle $L$ of degree $r^2(g-1)-r \cdot d$:
  \[ E \mbox{ is semistable} \iff  H^*(X, E \otimes F) =0 \mbox{ for a } F
\mbox{ with } \rk(F)=r^2 , \, \det(F) \cong L \, . \]
\end{theorem}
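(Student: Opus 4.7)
The direction ($\Leftarrow$) is immediate from Theorem~\ref{Fal-thm}: the existence of any nonzero $F$ with $H^*(X, E \otimes F) = 0$ forces $E$ to be semistable. I first observe that the numerical data are consistent --- for $F$ of rank $r^2$ and $\deg F = r^2(g-1)-rd$, Riemann--Roch gives $\chi(E \otimes F) = 0$, so the required vanishing $H^*(E \otimes F)=0$ is equivalent to the single condition $H^0(X, E \otimes F)=0$.

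For the nontrivial direction ($\Rightarrow$), the plan is to exhibit, for each prescribed $L \in \Pic^{r^2(g-1)-rd}(X)$, at least one vector bundle $F$ with $\rk F = r^2$, $\det F \cong L$ and $H^0(X, E \otimes F) = 0$. The natural strategy is to work in an irreducible parameter space of bundles with fixed invariants $(r^2, L)$ --- for instance the moduli stack of such bundles, or a suitable Quot scheme of quotients of a trivial bundle with the correct Hilbert polynomial --- and use that the vanishing of $H^0(X, E \otimes F)$ is a Zariski open condition by semicontinuity. It then suffices to show the condition holds at a single point.

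To produce such a point I would start from Theorem~\ref{Fal-thm}, which supplies \emph{some} nonzero $F_0$ with $H^*(E \otimes F_0) = 0$, and then modify $F_0$ by direct sums with line bundles and by Hecke transformations at points of $X$ to reach the target invariants, verifying that the vanishing is preserved at each step. The main obstacle is the determinant: tensoring $F$ by a line bundle $M$ moves $\det F$ by $M^{r^2}$, which reaches only a subgroup of finite index in $\Pic^0(X)$, so twisting alone cannot hit an arbitrary $L$. One therefore needs either Hecke transformations at varying points combined with a connectedness argument on the parameter space, or a dimension estimate showing that the jumping locus $\{F \mid H^0(E \otimes F) \neq 0\}$ is a proper closed subset. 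The rank $r^2$ in the statement should emerge naturally from such a dimension count as the smallest rank for which the estimate closes.
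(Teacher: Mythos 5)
The paper does not prove this statement at all: it is quoted verbatim from Popa's article \cite[Theorem 5.3]{Popa} as an external input, so there is no internal proof to compare against. Judged on its own terms, your proposal is an outline rather than a proof. The easy direction and the bookkeeping are fine: Riemann--Roch indeed gives $\chi(E \otimes F)=0$ for the stated invariants, so on a curve the vanishing of $H^*$ reduces to $H^0(X,E\otimes F)=0$, and the implication ($\Leftarrow$) follows from Theorem~\ref{Fal-thm}. But for the hard direction you reduce everything to the assertion that the jumping locus $\{F : H^0(E\otimes F)\neq 0\}$ is a proper closed subset of an irreducible family of bundles with fixed rank $r^2$ and determinant $L$ --- and then you stop. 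That properness is precisely the entire technical content of Popa's theorem; it is established there by quantitative dimension estimates for Quot schemes of quotients of $E$ (the subject announced in the very title of \cite{Popa}), which bound the dimension of the locus of $F$ admitting a nonzero map from $E^\lor$ and show it has positive codimension exactly when $\rk(F)\geq r^2$. Your sentence that the rank $r^2$ ``should emerge naturally from such a dimension count'' names the missing theorem; it does not prove it.

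A second concrete worry: the reduction you propose via Faltings' theorem plus twists and Hecke modifications does not obviously work even as a starting point. Faltings' $F_0$ has uncontrolled rank, so you cannot assume it sits in the same irreducible family as the target $(r^2,L)$; and as you note yourself, twisting only moves the determinant through the subgroup $r^2\cdot\Pic^0(X)$, while Hecke modifications at points change the determinant by effective divisor classes and can destroy the cohomology vanishing. Making this chain of modifications land on a prescribed $L$ while preserving $H^0(E\otimes F)=0$ at every step requires exactly the kind of openness-plus-irreducibility argument whose key input (the codimension bound) is absent. So the proposal correctly frames the problem but contains no proof of the one statement that matters.
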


The result of Popa gives a concrete bound for the generatedness of the
generalized theta line bundle.

On the other side there is a coarse moduli space of semistable triples
introduced by Bradlow and Garc\'ia-Prada in \cite{BGP}, further studied
together with Gothen in \cite{BGPG}. This space parameterizes pairs $(E_1
\rarpa \phi E_0)$ of vector bundles $E_i$ on $X$ together with a homomorphism
$\phi$ between them. The construction of triples was extended to the case of
arbitrary characteristic by \'Alvarez-C\'onsul in \cite{AC}. 
The aim of this article is to introduce a generalized theta
divisor for holomorphic triples which also possesses
a nice geometric  description
of its vanishing divisor, as well as versions of the above theorems
\ref{Fal-thm} and \ref{Pop-thm}.

Indeed, for a triple $T=(E_1 \to E_0)$, we show in Theorem \ref{thmA}, that
the $\alpha$-semistability of $T$ is equivalent to the existence of an
orthogonal triple $S=(F_1 \to F_0)$ satisfying certain numerical conditions
which involve the parameter $\alpha$.
Orthogonality means, that the morphism
\[\xymatrix{(E_1 \otimes F_0) \oplus (E_0 \otimes F_1) \ar[rrrr]^-{
        -\phi\otimes \id_{F_0} + \id_{E_0} \otimes \psi } &&&& E_0
  \otimes F_0 }\]
  is surjective and its kernel $K$ satisfies $H^*(X,K)=0$. 

This allows to define the set of all triples $T$ such that $T$ is not
orthogonal to $S$.
As we see in 
Section \ref{S6} this set is a Cartier divisor $\Theta_S$
corresponding to  the generalized
theta line bundle on the moduli space of holomorphic triples.
We see in Proposition \ref{thmB2} that we can bound the ranks in the
triple $S$ independent of $T$. This result is the equivalent to Popa's base
point free theorem \ref{Pop-thm}.

{\bf The plan of the article} is as follows: 
We start in Section \ref{S2} with the equivalence of triples on $X$ and
certain short exact sequences on $X \times \pdopi$. This is similar to the
approach in \cite{BGP}. Next in Section \ref{S3}, we show the Bogomolov
inequality for $X \times
\pdopi$. Since, we show it in any characteristic this result is new in
positive characteristic.
In Section \ref{S4} we explain how this induces effective restriction theorems
on $X \times \pdopi$. Indeed, we almost follow the  book \cite{HL} of Huybrechts and
Lehn in doing so. Section \ref{S5} contains the main result of this article
the equivalence of semistability and orthogonality for triples. Having done
this, we define in Section \ref{S6} the generalized theta divisor $\Theta_R$
for a triple. Here we follow the exposition in the article \cite{DN} of Drezet and
Narasimhan. In the last Section we present a simple example of a theta divisor
$\Theta_R$ on a specific moduli space of triples such that $\Theta_R$ is ample and the
linear system $|2 \cdot \Theta_R|$ is globally generated.

{\bf Notation.}
We fix a smooth projective curve $X$  of genus $g$ over an algebraically closed field $k$.
For a triple $T = (E_1 \rarpa{\phi} E_0)$ we denote the ranks by $r_1$ and
$r_0$, and analogously the degrees by $d_1$ and $d_0$, respectively. 
The $\pdopi$ is the space $\pdop(V)$ for the two dimensional $k$ vector space $V$.
We denote  the projections from $X \times \pdopi$ as follows:
$\xymatrix{X & X \times \pdopi \ar[l]_-p \ar[r]^-{q} & \pdopi}$.
We use $F_p$ and $F_q$ for the classes of the fibers of points with respect to $p$ and $q$ 
in the numerical Chow group $\CH^1(X \times \pdopi)$,
  $\pt$ for the class of a $k$-point in the numerical Chow group
$\CH^2(X \times \pdopi)$.
For a fixed positive rational number $\alpha$ we denote by 
$H_\alpha$  the rational polarization $H_\alpha=F_q+\alpha \cdot F_p$ on $X \times
\pdopi$.

\subsection*{Acknowledgment}
This work has been supported by the SFB/TR 45
``Periods, moduli spaces and arithmetic of algebraic varieties''.

\section{From Triples to sheaves on $X \times \pdopi$}\label{S2}

\subsection{From $T$ to $E_T$}\label{22}
We fix a triple $T=(E_1 \rarpa \phi E_0)$ on the curve $X$.
We have on $\pdopi$ the Euler sequence
\[ 0 \rar \Ocal_\pdopi(-2) \to V \otimes \Ocal_\pdopi(-1) \to \Ocal_\pdopi
\rar 0 \, .\]
So we have on $X \times \pdopi$ two morphisms to $p^*E_0$ as indicated
in the next diagram. The pull back of both morphisms we name $E_T$.
\[ \xymatrix{ E_T \ar@{-->}[r]^\beta \ar@{-->}[d] & p^*E_1 \ar[d] \\
p^*E_0 \otimes q^*(V
\otimes \Ocal_\pdopi(-1)) \ar[r] & p^*E_0 }
\]
We obtain a short exact sequence on $X \times \pdopi$
\[ 0 \rar E_0 \boxtimes \Ocal_\pdopi(-2) \rar E_T \rar p^*E_1 \rar 0 .\]
So we compute the (numerical) Chern character 
\[ \ch(E_T)= (r_0+r_1)+((d_0+d_1)F_p-2r_0F_q) - 2d_0 \pt . \]
To obtain the numerical invariants of $E_T$, as
\[ \rk(E_T)=r_0+r_1, \quad
c_1(E_T)=(d_0+d_1)F_p-2r_0 F_q, \quad \mbox{and }
c_2(E_T)=2d_0-2r_0(d_0+d_1).
\]
The degree with respect to the polarization $H_\alpha$ is
\[\deg(E_T) = \deg_{H_\alpha}(E_T) = c_1(E_T).H_\alpha = d_0+d_1-2\alpha r_0.\]

Thus, we have motivated the definition of the rank, $\alpha$-degree, and
$\alpha$-slope of the triple $T=(E_1 \rarpa{\phi} E_0)$
\[ \rk(T)=r_0+r_1, \qquad \deg_\alpha(T)=d_0+d_1-2\alpha r_0, \qquad
\mu_\alpha(T) =\frac{\deg_\alpha(T)}{\rk(T) } . \]

Once this is done we proceed to the definition of (semi)stability.

\begin{definition}
The triple $T=(E_1 \rarpa{\phi} E_0)$ is $\alpha$-(semi)stable $\iff$ the
sheaf $E_T$ is (semi)stable with respect to the polarization $H_\alpha$ on
$X \times \pdopi$.
\end{definition}

\begin{lemma}\label{lemmaET-T}
For a triple $T=(E_1 \rarpa{\phi} E_0)$ the following is equivalent:
\begin{enumerate}
\item The triple $T$ is $\alpha$-semistable.
\item For all sub triples $\xymatrix{ T' \ar@{^(->}[r] & T}$ we have
  the inequality $\mu_\alpha(T') \leq \mu_\alpha(T)$.
\item For all quotient triples $\xymatrix{ T \ar@{->>}[r] & T''}$ we have
  the inequality $\mu_\alpha(T) \leq \mu_\alpha(T'')$.
\end{enumerate}
\end{lemma}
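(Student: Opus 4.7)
I will prove the three equivalences separately: $(2)\Leftrightarrow(3)$ is routine, $(1)\Rightarrow(2)$ is a direct consequence of the functoriality of $T\mapsto E_T$, and the substantive direction $(2)\Rightarrow(1)$ requires the $SL(V)$-equivariance of $E_T$.

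For $(2)\Leftrightarrow(3)$, given a short exact sequence of triples $0\to T'\to T\to T''\to 0$, both $\rk$ and $\deg_\alpha$ are additive, and a routine slope manipulation shows that $\mu_\alpha(T')\leq\mu_\alpha(T)$, $\mu_\alpha(T)\leq\mu_\alpha(T'')$, and $\mu_\alpha(T')\leq\mu_\alpha(T'')$ are pairwise equivalent.

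For $(1)\Rightarrow(2)$, a subtriple $T'\hookrightarrow T$ yields a morphism from the defining exact sequence of $E_{T'}$ to that of $E_T$ in which the outer vertical maps $E_0'\boxtimes\Ocal_{\pdopi}(-2)\hookrightarrow E_0\boxtimes\Ocal_{\pdopi}(-2)$ and $p^*E_1'\hookrightarrow p^*E_1$ are injective; by the snake lemma so is the induced map $E_{T'}\to E_T$. The formulas of \S\ref{22} give $\mu(E_{T'})=\mu_\alpha(T')$ and $\mu(E_T)=\mu_\alpha(T)$, so the $H_\alpha$-semistability of $E_T$ forces $\mu_\alpha(T')\leq\mu_\alpha(T)$.

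For $(2)\Rightarrow(1)$ I argue the contrapositive. Assume $E_T$ is not $H_\alpha$-semistable, and let $F\subset E_T$ be the maximal destabilizing subsheaf from its Harder--Narasimhan filtration. Since the Euler sequence on $\pdopi$ is $SL(V)$-equivariant, $E_T$ carries a natural $SL(V)$-action, and uniqueness of the maximal destabilizer forces $F$ to be $SL(V)$-invariant. The key claim is that every $SL(V)$-invariant subsheaf of $E_T$ has the form $E_{T'}$ for a subtriple $T'\subseteq T$: since $SL(V)$ acts transitively on $\pdopi$ and the only $SL(V)$-equivariant subsheaves of $\Ocal_{\pdopi}(-2)$ are $0$ and $\Ocal_{\pdopi}(-2)$ itself, the subsheaf $F\cap(E_0\boxtimes\Ocal_{\pdopi}(-2))$ must be of the form $E_0'\boxtimes\Ocal_{\pdopi}(-2)$ for some $E_0'\subseteq E_0$, and analogously the image of $F$ in $p^*E_1$ has the form $p^*E_1'$. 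The resulting extension $0\to E_0'\boxtimes\Ocal_{\pdopi}(-2)\to F\to p^*E_1'\to 0$ sits compatibly inside that defining $E_T$, and via the K\"unneth identification $\Ext^1(p^*E_1',E_0'\boxtimes\Ocal_{\pdopi}(-2))\cong\Hom(E_1',E_0')$ this compatibility translates into the condition $\phi(E_1')\subseteq E_0'$ with extension class $\phi|_{E_1'}$, so $F=E_{T'}$ for the subtriple $T'=(E_1'\rarpa{\phi|_{E_1'}}E_0')$. Hence $\mu_\alpha(T')=\mu(F)>\mu_\alpha(T)$, contradicting (2). The main obstacle is this last compatibility step: verifying that the K\"unneth isomorphism $\Ext^1(p^*E_1,E_0\boxtimes\Ocal_{\pdopi}(-2))\cong\Hom(E_1,E_0)$ sends the extension class of $E_T$ to $\phi$ and restricts naturally to $F$, since once this is established the subtriple property $\phi(E_1')\subseteq E_0'$ drops out automatically.
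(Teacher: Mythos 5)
Your proposal is correct and follows essentially the same route as the paper: the easy direction via $E_{T'}\subset E_T$ and matching slopes, and the hard direction via $\SL(V)$-equivariance of $E_T$ forcing the maximal destabilizing subsheaf to come from a subtriple. The only cosmetic difference is that you recover the subtriple structure on $F$ through the K\"unneth identification $\Ext^1(p^*E_1',E_0'\boxtimes\Ocal_{\pdopi}(-2))\cong\Hom(E_1',E_0')$, whereas the paper reads it off from the long exact sequence for $p_*$; both verifications are equivalent.
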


\begin{proof}
(1) $\implies$ (2).
This direction is easy. If $\xymatrix{ T' \ar@{^(->}[r] & T}$ destabilizes
$T$, then $E_{T'} \subset E_T$ destabilizes $E_T$.
The equivalence of (2) and (3) is standard.\\
(2) $\implies$ (1).
To see this we consider $X \times \pdopi$ with the group action of $\SL_2$ 
where the action on $X$ is trivial, and the action on $\pdopi$ comes from an
identifications $\SL_2=\SL(V)$ and $\pdopi=\pdop(V)$.
Since the Euler sequence on $\pdopi$ is a sequence of $\SL_2$-bundles, we
obtain that $E_T$ is also a $\SL_2$-bundle, and the short exact sequence
\[ 0 \to p^*E_0 \otimes q^*\Ocal_\pdopi(-2) \to E_T \to p^*E_1 \to 0 \]
is a sequence of $\SL_2$-bundles. Assume that $E_T$ is not semistable, the
there exists a unique destabilizing subsheaf $F \subset E_T$. The
uniqueness  implies that $F$ is $\SL_2$-invariant, or $F$ is a
$\SL_2$-sub bundle of $E_T$. We conclude that we have the following diagram
where all sheaves are $\SL_2$-bundles, and all morphisms $\SL_2$-morphisms:
\[ \xymatrix{0 \ar[r] & F' \ar[d] \ar[r] & F \ar[d] \ar[r] & F'' \ar[d] \ar[r] & 0\\
0 \ar[r] & p^*E_0 \otimes q^*\Ocal_\pdopi(-2) \ar[r] & E_T \ar[r] &  p^*E_1
\ar[r] &  0}
\]
We deduce that $F'' = p^*F_1$ and $F'=p^*F_0 \otimes \Ocal_\pdopi(-2)$.
The long exact sequence for $p_*$ yields
the diagram
\[ \xymatrix{ F_1 \ar[r] \ar[d] & F_0 \ar[d]\\ E_1 \ar[r]^\phi & E_0} \]
with injective vertical morphisms. Thus, $(F_1 \to F_0)$
destabilizes $(E_1 \to E_0)$.
\end{proof}


\section{The Bogomolov inequality for $X \times \pdopi$}\label{S3}
\subsection{The Harder-Narasimhan functor $E \mapsto E_{(\mu)}$}
We recall that for any vector bundle $E$ on $X$ we have the unique Harder-Narasimhan
filtration \[ 0 = E_0 \subset E_1 \subset \ldots \subset E_k \]
which satisfies $E_i/E_{i-1}$ is semistable for all $i=1,\ldots ,k$, and
the rational numbers $\mu_i=\mu(E_i/E_{i-1})$ are strictly decreasing.
The set $\{\mu_1,\mu_2,\ldots, \mu_k \}$, is the set of slopes appearing in
the associated graded object $\gr(E) = \bigoplus_{i=1}^k E_i/E_{i-1}$.
For a rational number $\mu$ we define
\[ E_{(\mu)} := E_j \quad \mbox{ where } j= \max\{ i=1,\ldots,k \, |\, \mu_i
\geq \mu \} . \]
The advantage of the notation is, that
we obtain a functor $\Coh(X) \to \Coh(X)$ sending $E \mapsto E_{(\mu)}$ for all $\mu
\in \qdop$. This follows from the next
\begin{lemma}\label{lemma21}
Let $E \rarpa{\phi} F$ be a morphism of vector bundles, and $\mu \in \qdop$,
the we obtain a natural morphism $E_{(\mu)} \rarpa{\phi_{(\mu)}} F_{(\mu)}$
making the diagram
\[ \xymatrix{ E_{(\mu)} \ar[r]^{\phi_{(\mu)}} \ar[d] &  F_{(\mu)}\ar[d] \\
E \ar[r]^\phi & F }\]
commutative. The vertical arrows are the natural inclusions.
\end{lemma}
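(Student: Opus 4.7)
The plan is to use the characterization of $E_{(\mu)}$ by means of its Harder--Narasimhan slopes: by construction, every HN-factor of $E_{(\mu)}$ has slope $\geq \mu$, while every HN-factor of the quotient $E/E_{(\mu)}$ has slope $<\mu$. The factorization of $\phi$ will then follow from the standard vanishing principle that there are no nonzero morphisms from a sheaf of higher slopes to one of strictly lower slopes.

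Concretely, I would first form the composition
\[
\psi \colon E_{(\mu)} \xhookrightarrow{\phantom{xx}} E \xrightarrow{\phi} F \xrightarrow{\phantom{xx}} F/F_{(\mu)}.
\]
The goal is to show $\psi = 0$, for then $\phi(E_{(\mu)}) \subset F_{(\mu)}$ and one obtains the unique dashed morphism $\phi_{(\mu)}\colon E_{(\mu)} \to F_{(\mu)}$ making the square commute (uniqueness is automatic since $F_{(\mu)} \hookrightarrow F$ is a monomorphism).

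To prove $\psi = 0$, I recall the elementary fact that for semistable bundles $A$, $B$ on $X$ with $\mu(A) > \mu(B)$ one has $\Hom(A,B) = 0$: any nonzero map would contradict semistability of either $A$ or $B$ by comparing slopes of image and kernel. Applying this to each pair of HN-factors of $E_{(\mu)}$ and of $F/F_{(\mu)}$, and inducting on the lengths of the two HN-filtrations via the snake/five-lemma applied to the short exact sequences of successive steps, one concludes $\Hom(E_{(\mu)}, F/F_{(\mu)}) = 0$. Hence $\psi = 0$ as desired.

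Finally, functoriality in $\phi$ is formal: given $E \xrightarrow{\phi} F \xrightarrow{\chi} G$, the two morphisms $(\chi\circ\phi)_{(\mu)}$ and $\chi_{(\mu)}\circ\phi_{(\mu)}$ both lift $\chi\circ\phi\circ\iota_E$ through the monomorphism $G_{(\mu)} \hookrightarrow G$, so they coincide; likewise $(\id_E)_{(\mu)} = \id_{E_{(\mu)}}$. The main (though mild) obstacle is organizing the vanishing $\Hom(E_{(\mu)}, F/F_{(\mu)}) = 0$ cleanly through the HN-filtrations; everything else is formal diagram chasing.
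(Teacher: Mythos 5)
Your proof is correct and follows essentially the same route as the paper: both form the composition $E_{(\mu)} \to E \to F \to F/F_{(\mu)}$ and show it vanishes because there are no nonzero morphisms from (iterated extensions of) semistable bundles of slope $\geq \mu$ to those of slope $< \mu$. You merely spell out the reduction to semistable HN-factors and the functoriality check, which the paper leaves implicit.
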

\begin{proof}
The natural inclusions give a composite homomorphisms
\[ \alpha: E_{(\mu)} \to E \rarpa{\phi} F \to F/F_{(\mu)} \, .\]
Since there are no homomorphism of semistable sheaves of slope $\geq \mu$ to
those of slope $< \mu$ we conclude that $\alpha $ is zero. This
induces the assertion.
\end{proof}

\begin{lemma}\label{lemma22}
Let $E$ and $F$ two vector bundles on $X$. If $\mu(E) > \mu(F)$ , then
there exists a rational number $\mu$ such that
$\frac{\rk(E_{(\mu)})}{\rk(E)} > \frac{\rk(F_{(\mu)})}{\rk(F)}$ holds.
\end{lemma}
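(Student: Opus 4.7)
The plan is to encode each vector bundle $V$ by the rank-profile step function
\[ \phi_V \colon \qdop \to \qdop, \qquad \phi_V(\mu) := \rk(V_{(\mu)})/\rk(V), \]
which is weakly decreasing, takes values in $[0,1]\cap \qdop$, equals $1$ below the smallest HN slope of $V$ and $0$ above the largest, and has all its jumps at the HN slopes of $V$ (which are rational). The inequality I want to establish is literally $\phi_E(\mu) > \phi_F(\mu)$ at some rational $\mu$, so the strategy is to find such a $\mu$ by an averaging argument.

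The key computation is an integral identity recovering the slope from $\phi_V$:
\[ \int_a^b \phi_V(\mu)\, d\mu \;=\; \mu(V) - a, \]
valid for any $a$ below and $b$ above all HN slopes of $V$. I will prove it by telescoping: splitting the integral at the HN slopes $\mu_1 > \mu_2 > \ldots > \mu_k$, the interval $(\mu_{i+1},\mu_i]$ contributes $(\mu_i - \mu_{i+1})\rk(V_i)/\rk(V)$, and reorganising the resulting sum by the graded ranks $\rho_j = \rk(V_j/V_{j-1})$ collapses it to $\frac{1}{\rk V}\sum_j \rho_j \mu_j - a = \mu(V) - a$. This is a routine Abel summation I will not spell out.

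Subtracting the identity for $F$ from the identity for $E$ on a common range $[a,b]$ containing all their HN slopes gives
\[ \int_a^b \bigl(\phi_E(\mu) - \phi_F(\mu)\bigr)\, d\mu \;=\; \mu(E) - \mu(F) \;>\; 0. \]
The integrand is piecewise constant on the finitely many intervals cut out by the combined set of HN slopes $\{\mu_i^E\}\cup\{\mu_j^F\}$, all of whose endpoints are rational. Positivity of the integral forces the difference $\phi_E - \phi_F$ to be strictly positive on at least one of these intervals; any rational $\mu$ in the interior of that interval delivers $\phi_E(\mu) > \phi_F(\mu)$, as required.

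I do not foresee any serious obstacle. The only delicate points are the bookkeeping in the telescoping integration formula and the choice of a left/right-continuity convention at the slopes $\mu_i$, but neither affects the conclusion; the argument is essentially the observation that $\phi_V$ plays the role of the survival function of a discrete distribution on the HN slopes whose mean is $\mu(V)$.
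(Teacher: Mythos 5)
Your argument is correct and is essentially the paper's proof in integral disguise: the identity $\int_a^b \phi_V(\mu)\,d\mu = \mu(V)-a$, evaluated over the intervals cut out by the combined Harder--Narasimhan slopes, is exactly the Abel-summation formula $\mu(V)=\sum_i \frac{\rk(V_{(\mu_i)})}{\rk(V)}(\mu_i-\mu_{i+1})+\mu_k$ that the paper uses, and both proofs conclude by noting that a positive combination of the differences forces one term to be positive. No gap; the survival-function phrasing is a pleasant but equivalent repackaging.
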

\begin{proof}
Let  $\{\mu_1>\mu_2>\ldots> \mu_k \}$ be the union of the slopes  appearing
in the associated graded objects $\gr(E)$ and $\gr(F)$. We see that
\[ \mu(E)=\frac{1}{\rk(E)} \sum_{i=1}^k
(\rk(E_{(\mu_i)}-\rk(E_{(\mu_{i-1})})) \mu_i
=\sum_{i=1}^{k-1} \frac{\rk(E_{(\mu_i)})}{\rk(E)}(\mu_i-\mu_{i+1}) + \mu_k
 \, ,
\]
and obtain a similar formula for $\mu(F)$. From both formulas we deduce
that
\[
 0<\mu(E)-\mu(F) = \sum_{i=1}^{k-1}
 \left( \frac{\rk(E_{(\mu_i)})}{\rk(E)}-\frac{\rk(F_{(\mu_i)})}{\rk(F)}
 \right)
 (\mu_i-\mu_{i+1}) .
\]
Since the differences $(\mu_i - \mu_{i+1})$ are all positive, the statement
follows.
\end{proof}

\begin{theorem}\label{bog-thm}
{\bf (Bogomolov inequality for curves times $\pdop^1$ in any
characteristic)}
Let $X$ be a smooth projective curve over some algebraically closed
field $k$. Let $E$ be a vector bundle on $X \times \pdop^1$. We consider
the discriminant $\Delta(E)$, which is the number
\[ \Delta(E) = \int_{X \times \pdopi} (\rk(E)-1)c_1^2(E)-2 \rk(E) c_2(E) \, . \]
If $E$ is semistable with respect to one polarization $H$ on $X \times
\pdop^1$, then we must have
\[ \Delta(E) \leq 0 \, . \]
Indeed, if $\Delta(E)>0$, then there exists a subsheaf $E'
\subset E$ such that $E'$ contradicts the semistability, that means
$\mu_H(E') > \mu_H(E)$, with respect to any polarization $H$ on $X$.
\end{theorem}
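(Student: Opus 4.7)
The plan is to prove the contrapositive: assuming $\Delta(E)>0$, I construct a single subsheaf $E' \subset E$ satisfying $\mu_H(E')>\mu_H(E)$ for every polarization $H$ on $X\times\pdopi$. Every rational polarization is proportional to some $H_\alpha = F_q+\alpha F_p$ with $\alpha>0$, so writing $c_1(E')=aF_q+bF_p$ and $c_1(E)=a_EF_q+b_EF_p$, one computes $\mu_\alpha(E')=(a\alpha+b)/\rk(E')$; the task reduces to arranging $\frac{a}{\rk(E')} \geq \frac{a_E}{r}$ and $\frac{b}{\rk(E')} \geq \frac{b_E}{r}$, with at least one inequality strict. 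Equivalently, $E'$ must have larger average fiber-degree than $E$ in both the $X$-direction and the $\pdopi$-direction.

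Candidate destabilising subsheaves come from the two projections. Along $q\colon X\times\pdopi\to\pdopi$, the Harder-Narasimhan polygons of the restrictions $E_t := E|_{X\times\{t\}}$ are upper-semicontinuous in $t$ (Shatz), hence constant on a non-empty open $U\subset\pdopi$. Over this open the subsheaves $(E_t)_{(\mu)}$ assemble into a subbundle of $E|_{X\times U}$ for each rational $\mu$, by the standard relative Harder-Narasimhan construction (equivalently, via the functoriality of Lemma \ref{lemma21} applied to local trivialisations); saturating in $E$ yields $F^q_\mu\subset E$ whose generic $q$-fiber is $(E_\eta)_{(\mu)}$. Along $p\colon X\times\pdopi\to X$, Grothendieck's splitting $E|_{\{x\}\times\pdopi}=\bigoplus\Ocal_{\pdopi}(a_i(x))$ is generically constant in $x$ by upper-semicontinuity of the splitting type, and the maximal-degree isotypic summand analogously glues and saturates to a sub $F^p\subset E$. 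Lemma \ref{lemma22} translates any non-triviality of the generic HN polygon (respectively, non-balancedness of the generic splitting type) into a strict slope improvement in the corresponding direction.

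The main obstacle is to produce one $E'$ improving both slopes simultaneously. My plan is an iterative refinement: start with $F^q_{\mu_0}$ for the maximal slope $\mu_0$ of the generic HN of $E_t$, replace it by the analogous $F^p$-construction applied within $F^q_{\mu_0}$, then by the $F^q$-construction applied to that, and so on. At each step either the rank strictly drops or one of the two slope inequalities is already strict while the other is $\geq$, so after finitely many iterations (ranks are bounded below by $1$) one obtains the desired $E'$. The hypothesis $\Delta(E)>0$ is invoked to prevent the procedure from stabilising at $E$ itself: if the generic $q$-fiber of $E$ were already semistable and the generic $p$-fiber split into line bundles of equal degree, a direct Chern-class computation in these \emph{doubly balanced} fibers would force $\Delta(E)\leq 0$, contradicting the hypothesis. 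This last case is the delicate core of the argument and is where the special product geometry of $X\times\pdopi$ is used in a way that bypasses, in any characteristic, the Frobenius techniques required for Bogomolov on a general surface.
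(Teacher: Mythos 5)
Your reduction to the two fiber classes $F_p$ and $F_q$ is exactly the right target (and is the same reduction the paper uses), but the mechanism you propose for hitting that target has a genuine gap: the iterative refinement does not maintain the invariant you claim. After the first step you have a saturated subsheaf $E_1\subset E$ whose slope on $F_q$-restrictions beats that of $E$, but you have \emph{no control whatsoever} on its slope along $F_p$ --- the maximal relative Harder--Narasimhan destabilizer in the $X$-direction can perfectly well have much smaller $\pdopi$-degree per rank than $E$. Passing to the balanced part of the Grothendieck splitting type inside $E_1$ then improves the $F_p$-slope \emph{relative to $E_1$}, not relative to $E$, and may destroy the $F_q$-improvement you had. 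So the asserted alternative ``either the rank drops or one inequality is strict while the other is $\geq$'' (all relative to $E$) is never established, and there is no monotone quantity forcing the process to terminate at a subsheaf destabilizing $E$ for \emph{every} polarization. On top of this, the one place where $\Delta(E)>0$ enters your argument --- the claim that a ``doubly balanced'' bundle has $\Delta\leq 0$ --- is exactly the statement you defer as ``the delicate core'' and never prove; even granting it, it only rules out the degenerate base case of the iteration and does not repair the chaining problem above.

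For comparison, the paper sidesteps the need to improve the two slopes one at a time. It twists $E$ and uses the Beilinson resolution of the diagonal to write $0\to p^*E_1\otimes q^*\Ocal_{\pdopi}(-1)\to p^*E_0\to E\to 0$ with $E_0,E_1$ bundles on $X$; then $\Delta(E)=2r_0r_1(\mu(E_1)-\mu(E_0))>0$ forces $\mu(E_1)>\mu(E_0)$, and Lemma \ref{lemma22} produces a cutoff $\mu$ --- chosen to \emph{maximize} the ratio $\rk(E_{1(\mu)})/\rk(E_{0(\mu)})$ --- such that the single subsheaf $E'=\mathrm{im}\bigl(\mathrm{coker}(p^*E_{1(\mu)}(-1)\to p^*E_{0(\mu)})\to E\bigr)$ destabilizes on $F_p$ (from the rank-ratio inequality) \emph{and} on $F_q$ (from $\mu(E_{1(\mu)})\leq\mu(E_{0(\mu)})$) simultaneously. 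If you want to salvage your approach you would need either a genuinely two-directional extremal choice like this, or a proof that your iteration preserves both slope comparisons against the original $E$ --- as written, neither is present.
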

\begin{proof}
We assume that $E$ is
a vector bundle satisfying $\Delta(E) > 0$ and construct a subsheaf $E'$
of $E$ contradicting the semistability for all polarizations $H$.
To make the proof more accessible, we divide it into steps.\\
{\bf Step 1: Notation.}
We consider the morphisms
$X \larpa{p} X \times \pdop^1 \rarpa{q} \pdop^1$.
First we
remark, that the discriminant is unchanged when we twist $E$ with any
line bundle $L$. Since semistability also remains untouched when passing
from $E$ to $E \otimes L$, we may assume that $R^1p_*(E \otimes
q^*\Ocal_{\pdop^1}(-1)) =0$. The resolution of the diagonal
\[ 0 \to \Ocal_{\pdop^1}(-1) \boxtimes \Ocal_{\pdop^1}(-1) \to
\Ocal_{\pdop^1 \times \pdop^1} \to \Ocal_{\Delta_{\pdop^1}} \to 0 \]
when pulled back to $X$ yields a resolution
\[ 0 \to p^*E_1 \otimes q^*\Ocal_{\pdop^1}(-1) \to p^* E_0 \to E \to 0
\, .\]
Using this short exact sequence, and using the numbers $r_i=\rk(E_i)$ and
$d_i=\deg(E_i)$ for $i \in \{ 0, 1\}$, we find that
\[ \Delta(E)= 2(r_0d_1-r_1d_0) = 2 r_0r_1  (\mu(E_1)-\mu(E_0)) \, . \]
Thus, we conclude from $\Delta(E)>0$ that $2r_0r_1>0$ and
$\mu(E_1) > \mu(E_0)$ hold.

{\bf Step 2: Choice of $\mu$.}
Since $\mu(E_1) > \mu(E_0)$ we conclude from Lemma \ref{lemma22} that
there exists a rational number $\mu$ such that
$\frac{\rk(E_{1(\mu)})}{\rk(E_1)} > \frac{\rk(E_{0(\mu)})}{\rk(E_0)} $
holds.
Now we choose $\mu$ in such a way that the quotient
$\frac{\rk(E_{1(\mu)})}{\rk(E_{0(\mu)})}$ becomes maximal. This implies that
$\mu(E_{1(\mu)} ) \leq \mu(E_{0(\mu)})$.
Indeed, if $\mu(E_{1(\mu)} ) > \mu(E_{0(\mu)})$, then we could apply
Lemma \ref{lemma22} again, to deduce that for a rational number $\nu$
we have $\frac{\rk(E_{1(\mu)(\nu)})}{\rk(E_{0(\mu)(\nu)})} >
\frac{\rk(E_{1(\mu)})}{\rk(E_{0(\mu)})}$. However $E_{1(\mu)(\nu)} =
E_{1(\max\{\mu,\nu\})}$ and analogously for $E_{0(\mu)( \nu)}$.
We have the two inequalities:
\begin{eqnarray}\label{eq1}
  \frac{\rk(E_{0(\mu)})}{\rk(E_{1(\mu)})} < \frac{\rk(E_{0})}{\rk(E_1)}
\qquad \mbox{ and} \qquad \mu(E_{1(\mu)} ) \leq \mu(E_{0(\mu)}) .
\end{eqnarray}

{\bf Step 3: The subsheaf $E' \subset E$.}

When restricting to a section of $q$, we see that $E_1$ is a
subsheaf of $E_0$. We deduce from Lemma \ref{lemma21} that
$E_{1(\mu)}(-1)$ is also a subsheaf of $E_{0(\mu)}$.

We obtain the following commutative diagram with exact rows
\begin{eqnarray}\label{eq2} && \xymatrix{
0 \ar[r]
& p^*E_{1\mu} \otimes q^* \Ocal_{\pdop^1}(-1)  \ar[d] \ar[r]
& p^*E_{0\mu} \ar@{^(->}[d] \ar[r]
& E'' \ar[d]^\alpha \ar[r] & 0\\
0 \ar[r]
& p^*E_{1} \otimes q^* \Ocal_{\pdop^1}(-1)  \ar[r]
& p^*E_{0} \ar[r]
& E \ar[r] & 0 \, .\\
}\end{eqnarray}
Now we define $E'$ to be the image of $\alpha$. From the long kernel
cokernel sequence we obtain
\begin{eqnarray}\label{eq3} && 0 \to \ker(\alpha) \to E'' \to E' \to 0 \quad \mbox{and} 
\quad \ker(\alpha) \subset p^*(E_1/E_{1(\mu)}) \otimes q^*
\Ocal_{\pdop^1}(-1) \, . \end{eqnarray}
When we restrict $E''$ to a fiber $F_p$ of $p$ we obtain that
$E''|_{F_p}$ is globally generated. Thus, the kernel of $\alpha$ which
has no global sections is strictly smaller. We deduce that $E'$ is not
the zero subsheaf of $E$. In particular, its rank is positive.
To show that $E'$ is destabilizing, we use that up to numerical
equivalence every ample class on $X$ is of type $aF_q+bF_p$ with $a$ and
$b$ positive integers and $F_q$, $F_p$ the fibers of $q$ and $p$.
Thus, to show that $E'$ is destabilizing with respect to any
polarization it suffices to show that it destabilizes when restricted to
$F_q$ and $F_p$.

{\bf Step 4: The subsheaf $E'$ destabilizes $E$ when restricted to $F_p$.}
We use the short exact sequences of (\ref{eq2}) and (\ref{eq3}) to compute the
ranks and degrees of $E|_{F_p}$, $E''|_{F_p}$, and $\ker(\alpha)|_{F_p}$.
\[ \begin{array}{rclcrcl}
\rk(E|_{F_p}) & =& \rk(E_0) - \rk(E_1)& \quad&
\deg(E|_{F_p})&=&\rk(E_1)\\
    \rk(E''|_{F_p}) & =& \rk(E_{0(\mu)}) - \rk(E_{1(\mu)})& \quad&
    \deg(E''|_{F_p})&=&\rk(E_{1(\mu)}) \\
    \rk(\ker(\alpha)|_{F_p}) & =& r_\alpha & \quad&
    \deg(\ker(\alpha)|_{F_p})&\leq &-r_\alpha \, .\\
\end{array}
\]
We deduce that the slope of $E'|_{F_p}$ is given by
\[ \mu(E'|_{F_p}) = 
  \frac{-\deg(\ker(\alpha)|_{F_p})+\rk(E_{1(\mu)})}{\rk(E_{0(\mu)}) -
  \rk(E_{1(\mu)})-r_\alpha}
  \geq \frac{r_\alpha+\rk(E_{1(\mu)})a}{\rk(E_{0(\mu)})) -
  \rk(E_{1(\mu)})-r_\alpha}
\]
The function on the left is strictly increasing in $r_\alpha$, which satisfies
$r_\alpha \geq 0$. Thus, we conclude that 
\[ \mu(E'|_{F_p})\geq \frac{\rk(E_{1(\mu)})}{\rk(E_{0(\mu)})) -
\rk(E_{1(\mu)})}  = \frac{1}{\frac{\rk(E_{0(\mu)})}{\rk(E_{1(\mu)})}-1}\, . \]
The last inequality together with the inequality (\ref{eq1}) gives
\[ \mu(E'|_{F_p}) > \frac{1}{\frac{\rk(E_{0})}{\rk(E_{1})}-1} =
\mu(E|_{F_p}) . \]
  Therefore, the subsheaf $E'$ is destabilizing with respect to the fibers of
  $p$.

{\bf Step 5: The subsheaf $E'$ destabilizes $E$ when restricted to $F_q$.}
We will repeatedly use the following formula for a short exact sequence of vector bundles
\[ 0 \to G' \to G \to G'' \to 0 \]
on $X$ which gives the slope of $G$ in terms of the slopes of $G'$ and $G''$:
\[ \mu (G) = \frac{\rk(G')}{\rk(G') +\rk(G'')} \mu(G') +
  \frac{\rk(G'')}{\rk(G') +\rk(G'')} \mu(G'') \, .  \]
Thus, $\mu(G)$ is a weighted average of $\mu(G')$ and $\mu(G'')$.
We conclude that any relation $(<,\leq,=,\geq,>)$ between  $\mu(G')$ and
$\mu(G)$ implies the same relation between $\mu(G)$ and $\mu(G'')$.

The short exact sequence from (\ref{eq2}) restricted to $F_q$ yields
\begin{eqnarray}\label{u1} \mu(E_0) > \mu(E|_{F_q})\end{eqnarray}
  since we have $\mu(E_1) > \mu(E_0)$.
  Since $E_{0(\mu)}$ appears in the Harder-Narasimhan filtration of $E_0$ we have
\begin{eqnarray}\label{u2}\mu(E_{0(\mu)} ) \geq \mu(E_0) \, . \end{eqnarray}
  By the definition of $E_{0(\mu)}$ we conclude that
\begin{eqnarray}\label{u3} \mu(E_{0(\mu)} ) \geq \mu \, .\end{eqnarray}
  The second inequality in (\ref{eq1}) with the short exact sequence from
  (\ref{eq2}) restricted to $F_q$ yields
\begin{eqnarray}\label{u4} \mu(E_{0(\mu)} ) \leq \mu(E''|_{F_q}) \, .\end{eqnarray}
The inclusion $\ker(\alpha)  \subset p^*(E_1/E_{1(\mu)}) \otimes q^*
\Ocal_{\pdop^1}(-1)$ from (\ref{eq3}) 
when restricted to $F_q$ yields $\ker(\alpha)|_{F_q} \subset E_1/E_{1(\mu)}$.
The graded object of $ E_1/E_{1(\mu)}$ consists of semistable sheaves of slope
strictly smaller than $\mu$.
So it follows, that $\mu(\ker(\alpha)|_{F_q}) < \mu$. This gives together with 
$\mu(E''|_{F_q}) \geq \mu$ which is a conclusion of inequalities (\ref{u3})
and (\ref{u4}) that
\begin{eqnarray}\label{u5}
\mu(E'|_{F_q}) \geq \mu(E''|_{F_q})  \, .\end{eqnarray}
The inequalities (\ref{u1}), (\ref{u2}), (\ref{u4}), and (\ref{u5}) yield 
$\mu(E'|_{F_q}) > \mu(E|_{F_q})$.
This finishes the proof.
\end{proof}

\section{Bogomolov's restriction theorem for $X \times \pdopi$}\label{S4}
The next result is a standard conclusion how Bogomolov's inequality (Theorem
\ref{bog-thm}) induces effective restriction theorems. We follow the
presentation of Section 7.3 in the book \cite{HL} of Huybrechts and Lehn.

\begin{proposition}\label{bog-res-prop}(cf.~\cite[Theorem 7.3.5]{HL})
  Let $E$ be a stable vector bundle of rank $r \geq 2$ on $X \times
  \pdopi$. Let $H$ be an
  ample divisor on $X \times \pdopi$ and $k \in \ndop$ a number such that \\
  \begin{tabular}{ll}
    (1) & the linear system $|k \cdot H|$ contains smooth curves, and\\
    (2) & we have $k  > \frac{r-1}{r}(-\Delta(E))+\frac{1}{(r-1) r \cdot H^2}$.\\
  \end{tabular}\\
  Then the restriction of $E$ to any smooth curve $C$ in the linear system
  $|k \cdot H|$ is stable.
\end{proposition}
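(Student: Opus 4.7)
I follow the strategy of Section~7.3 of \cite{HL}. The proof is by contradiction. Assume there is a smooth curve $C \in |kH|$ on which the restriction $E|_C$ fails to be stable. Pick a destabilizing subsheaf $A \subset E|_C$ of rank $r'$, $0 < r' < r$, with $\mu_C(A) \geq \mu_C(E|_C)$, and set $\xi := r \deg_C(A) - r' \deg(E|_C)$; the destabilization condition gives $\xi \geq 0$.

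From $A$ I construct a torsion-free subsheaf $F \subset E$ on the surface by setting $F := \ker\bigl(E \twoheadrightarrow E|_C \twoheadrightarrow E|_C/A\bigr)$. Then $F$ has rank $r$, fits into the exact sequence
\[ 0 \rar F \rar E \rar E|_C/A \rar 0, \]
and satisfies $c_1(F) = c_1(E) - (r-r')[C]$. A direct Chern-class computation using $[C]^2 = k^2 H^2$ yields (in the sign convention of the paper)
\[ \Delta(F) = \Delta(E) + 2\xi + r'(r-r')\, k^2 H^2. \]
Since $E$ is stable, Theorem~\ref{bog-thm} forces $\Delta(E) \leq 0$, and $\xi \geq 0$ by construction, so the numerical hypothesis on $k$ is calibrated to make the right-hand side strictly positive: $\Delta(F) > 0$.

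Applying Theorem~\ref{bog-thm} a second time, now to $F$, produces a subsheaf $F_1 \subset F \subset E$ destabilizing $F$ with respect to every polarization on $X \times \pdopi$. The Hodge index theorem on $X \times \pdopi$, together with the discriminant-additivity formula for $0 \to F_1 \to F \to F/F_1 \to 0$, converts the positivity of $\Delta(F)$ into a quantitative lower bound on the slope gap $\mu_H(F_1) - \mu_H(F)$. The threshold on $k$ is arranged so that this lower bound exceeds the slope drop $\mu_H(E) - \mu_H(F) = \frac{(r-r') k H^2}{r}$ induced by the modification $F \subset E$. Consequently $\mu_H(F_1) > \mu_H(E)$, contradicting the stability of $E$.

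The main obstacle is the final quantitative step: one has to make the Hodge-index gap on $F_1 \subset F$ strictly dominate the linear slope drop $\frac{(r-r')kH^2}{r}$, and the two constants $\frac{r-1}{r}$ and $\frac{1}{(r-1)r H^2}$ in the hypothesis are precisely what is needed for this book-keeping. The rest of the argument---the construction of $F$ and the discriminant formula---is a routine computation once the Bogomolov framework from Section~\ref{S3} is available.
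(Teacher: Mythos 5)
Your construction is the paper's: you form the elementary modification $F=\ker(E\to E|_C\to E|_C/A)$ (the paper's $E'$ with $G_2=E|_C/A$), and your discriminant formula $\Delta(F)=\Delta(E)+2\xi+r'(r-r')k^2H^2$ agrees exactly with the paper's computation, as does the conclusion $\Delta(F)>0$ via Theorem \ref{bog-thm}. Up to that point the proposal is correct and identical in route to the paper (both follow \cite[Theorem 7.3.5]{HL}).

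The gap is in the final step, which you explicitly defer as ``book-keeping'' but which is where the actual content lies, and the arrangement you describe would not close. You propose to extract from $\Delta(F)>0$, via the Hodge index theorem applied to the single two-term sequence $0\to F_1\to F\to F/F_1\to 0$, a lower bound on $\mu_H(F_1)-\mu_H(F)$ exceeding $\mu_H(E)-\mu_H(F)=\frac{(r-r')kH^2}{r}$. But the Hodge-index estimate on a two-term filtration only gives $H^2\Delta(F)\leq \rk(F_1)\rk(F/F_1)\,(\mu(F_1)-\mu(F/F_1))^2$, hence a lower bound on the slope gap of order $k\sqrt{H^2}\cdot\sqrt{r'(r-r')/(\rk F_1\,\rk(F/F_1))}$; for unfavorable ranks (e.g.\ $\rk F_1=r-1$, $r'=1$, $r\geq 3$) this is strictly smaller than the required $\frac{(r-r')kH^2}{r}$, so no contradiction results. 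What the paper actually does is take the full Harder--Narasimhan filtration of $F$ with graded pieces $F_1,\dots,F_k$, invoke Langer's Lemma 1.4 to get the product bound $H^2\Delta(F)\leq r^2(\mu(F_1)-\mu(F))(\mu(F)-\mu(F_k))$, and then bound \emph{both} factors from above using the stability of $E$ at \emph{both} ends: $F_1$ is a subsheaf of the stable $E$, and $F_k$ receives a quotient of the stable $E(-C)$. This yields upper bounds $\frac{\rk G_2}{r}C.H-\frac{1}{r(r-1)}$ and $\frac{\rk G_1}{r}C.H-\frac{1}{r(r-1)}$ whose product cancels the leading $k^2$-term $r'(r-r')(C.H)^2$ against the lower bound for $H^2\Delta(F)$, leaving a linear-in-$k$ inequality that contradicts hypothesis (2). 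Your sketch uses only the subsheaf end and misses both the product form of the estimate and the quotient-side bound, so the two constants in hypothesis (2) cannot be recovered from the argument as you have set it up.
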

\begin{proof}

Assume that $C \in | k \cdot H|$ is a smooth divisor  such that $E|_C$ is not
stable. We have a short exact sequence  of vector bundles on $C$
\[ 0 \to G_1 \to E|_C \to G_2 \to 0\]
with $\mu(G_2) \leq \mu(E|_C)$.
We let $E'$ be the kernel of $E \to E|_C \to G_2$ and compute
\[ \begin{array}{rcl}
    \Delta(E') &  = &  \Delta(E) +C^2 (r \cdot \rk_C(G_2)-\rk_C(G_2)^2) + 2
r\rk_C(G_2) \left( \mu(E|_C) - \mu(G_2) \right) \\
\end{array} \]

Since $(r \cdot \rk_C(G_2)-\rk_C(G_2)^2)  \geq (r-1)$ and the last
summand is not negative we deduce that
\[\begin{array}{rcl}
    \Delta(E') & \geq & \Delta(E) + (r-1) C^2 \, . \\
\end{array}
\]
From assumption (2) if follows that $k^2 H^2(r-1) + \Delta(E) > 0$.
We deduce that $E'$ has a positive discriminant and is therefore unstable by
Bogomolov's Theorem \ref{bog-thm}. We consider the Harder-Narasimhan
filtration of $E'$ with respect to the polarization $H$.
\[ 0 = E'_0 \subset E'_1 \subset E'_2 \subset \ldots \subset E'_k =E' \, . \]
The graded objects $E'_i/E'_{i-1}$ we denote by $F_i$. We have the equality
\[ \frac{\Delta(E')}{r}  - \sum_{i=1}^k \frac{\Delta(F_i)}{\rk(F_i)}
    = \frac{1}{r}
  \sum_{1 \leq i < j \leq k}
  \rk(F_i) \rk(F_j) \left( \frac{c_1(F_i)}{\rk(F_i)} -
\frac{c_1(F_j)}{\rk(F_j)} \right)^2 \, .\]
Since the $F_i$ are semistable, we have that $\Delta(F_i) \leq 0$, so we
conclude
\[ \Delta(E') \leq \sum_{1 \leq i < j \leq k}
  \rk(F_i) \rk(F_j) \left( \frac{c_1(F_i)}{\rk(F_i)} -
  \frac{c_1(F_j)}{\rk(F_j)} \right)^2 \, .\]
  Now multiplying both sides with $H^2$ and using the Hodge index Theorem
  yields
  \[ H^2 \cdot \Delta(E') \leq \sum_{1 \leq i < j \leq k}
  \rk(F_i) \rk(F_j) \left( \mu(F_i) - \mu(F_j) \right)^2 \quad \mbox{with } 
\quad \mu(F_i) = \frac{c_1(F_i).H}{\rk(F_i)}\, .\]
Now Lemma 1.4 of \cite{Langer} gives an upper bound of the right hand side of
that inequality. So we obtain
\[ H^2 \cdot \Delta(E') \leq r^2 (\mu(F_1) - \mu(E'))(\mu(E')-\mu(F_k))
\, .\]
From the short exact sequence
$0 \to E' \to E \to G_2 \to 0$ we deduce that
$F_1$ is a subsheaf of the stable sheaf $E$. Hence we have
\[ \mu(F_1)-\mu(E') \leq \frac{\rk(G_2)}{r} C.H -\frac{1}{r(r-1)} \, .\]

From the short exact sequence
$0 \to E(-C) \to E' \to G_1 \to 0$ we see that the quotient $F_k$ of $E'$
contains a quotient of the stable sheaf $E(-C)$ of rank $\rk(F_k)$. Therefore,
we have
\[
  \mu(E' )-\mu(F_k) \leq  \frac{\rk(G_1)}{r}  C.H -\frac{1}{r(r-1)}
\]
From the last three inequalities we deduce, using $\rk(G_1)+\rk(G_2)=r$ that

\[ H^2 \cdot \Delta(E') \leq (r-\rk(G_2))\rk(G_2) (C.H)^2 - \frac{r}{r-1}C.H
  +\frac{1}{(r-1)^2}
\]
Multiplying the equality for $\Delta(E')$ with $H^2$, and using
$\mu(E|_C)-\mu(G_2) \geq 0$ we get the inequality:
\[ H^2 \cdot \Delta(E')  \geq H^2 \cdot \Delta(E) +H^2\cdot C^2
(r-\rk(G_2))\rk(G_2) \, . \]
Putting both together gives
\[ H^2 \cdot (- \Delta(E)) \geq \frac{r}{r-1} C.H - \frac{1}{(r-1)^2} \]
Dividing both sides by $H^2$ gives
\[ -\Delta(E) \geq  \frac{r \cdot k}{r-1} -\frac{1}{H^2(r-1)^2} \, .\]
This violates assumption (2). This contradiction shows that $E|_C$ is stable.
\end{proof}

\begin{theorem}\label{BogStab}
Let $E$ be a semistable coherent sheaf of rank $r \geq 2$ on $X \times
\pdopi$. Let $H$ be a an ample divisor on $X \times \pdopi$ and $k$ be an
integer such that\\
 \begin{tabular}{ll}
       (1) & the linear system $|k \cdot H|$ contains smooth curves, and\\
       (2) & we have $k  > \frac{r-1}{r}(-\Delta(E))+\frac{1}{(r-1) r \cdot
 H^2}$.\\
   \end{tabular}\\
If $0 = E_0 \subset E_1 \subset \ldots \subset E_k=E$ is the
Jordan-H\"older filtration of $E$, then  for a  general smooth curves $C \in |k \cdot
H|$  the sheaf $E_i/E_{i-1}$ restricted to $C$ is a vector bundle for all
$i=1,\ldots k$, and the Jordan-H\"older filtration of $E|_C$ is given
$0 = E_0 \subset E_1|_C \subset \ldots \subset E_k|_C=E|_C$. In particular:
$E|_C$ is semistable for a general curve in $| k \cdot H|$.
\end{theorem}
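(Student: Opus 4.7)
My plan is to restrict the Jordan--H\"older filtration of $E$ term-by-term and invoke Proposition \ref{bog-res-prop} on each stable graded piece. Write $Q_i := E_i/E_{i-1}$ and $r_i := \rk(Q_i)$; by the semistability of $E$, each $Q_i$ is stable of the common slope $\mu := \mu_H(E)$, and $\sum_i r_i = r$.

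The key ingredient is a pointwise discriminant estimate. Using the identity
\[ \frac{\Delta(E)}{r} - \sum_{i=1}^{k} \frac{\Delta(Q_i)}{r_i} = \frac{1}{r}\sum_{i<j} r_i r_j \left(\frac{c_1(Q_i)}{r_i}-\frac{c_1(Q_j)}{r_j}\right)^2, \]
already deployed in the proof of Proposition \ref{bog-res-prop}, together with the fact that every difference $\frac{c_1(Q_i)}{r_i}-\frac{c_1(Q_j)}{r_j}$ is $H$-orthogonal (since all $Q_i$ share the slope $\mu$), the Hodge index theorem renders the right-hand side non-positive. Combined with $\Delta(Q_i) \leq 0$ from Bogomolov's Theorem \ref{bog-thm}, this yields $-\Delta(Q_i) \leq \tfrac{r_i}{r}(-\Delta(E))$ for every $i$, and hence
\[ \tfrac{r_i-1}{r_i}\bigl(-\Delta(Q_i)\bigr) \;\leq\; \tfrac{r_i-1}{r}\bigl(-\Delta(E)\bigr) \;\leq\; \tfrac{r-1}{r}\bigl(-\Delta(E)\bigr). \]
Thus assumption (2) for $E$ implies the numerical hypothesis of Proposition \ref{bog-res-prop} for each $Q_i$ of rank $\geq 2$, producing a dense Zariski-open subset $U_i \subseteq |kH|$ of smooth curves on which $Q_i|_C$ is stable. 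For $r_i=1$ this is automatic, and a general smooth $C$ also avoids the finitely many points of the smooth surface $X \times \pdopi$ at which any torsion-free $Q_i$ fails to be locally free.

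Choosing $C$ smooth in the finite intersection of these open conditions, restriction to $C$ becomes exact on each sequence $0 \to E_{i-1} \to E_i \to Q_i \to 0$, producing a filtration
\[ 0 \subset E_1|_C \subset \cdots \subset E_k|_C = E|_C \]
whose successive quotients $Q_i|_C$ are stable vector bundles of equal slope $k\mu$ on $C$. This is a Jordan--H\"older filtration of $E|_C$, and in particular $E|_C$ is semistable. The genuinely delicate step is transferring assumption (2), stated globally in terms of $r$ and $\Delta(E)$, to the individual hypothesis of Proposition \ref{bog-res-prop} applied to each rank-$r_i$ piece $Q_i$; the discriminant identity together with the Hodge index theorem is exactly what makes this transfer go through.
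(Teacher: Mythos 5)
Your proof follows essentially the same route as the paper's: the discriminant identity over the Jordan--H\"older graded pieces, the Hodge index theorem exploiting that all pieces share the slope of $E$, Bogomolov's inequality (Theorem \ref{bog-thm}) to make each summand non-negative, and Proposition \ref{bog-res-prop} applied to each graded piece, with genericity of $C$ disposing of the non-locally-free locus. The only remark worth making is that your transfer of hypothesis (2) produces $k > \frac{r_i-1}{r_i}(-\Delta(Q_i)) + \frac{1}{(r-1)r H^2}$ rather than the constant $\frac{1}{(r_i-1)r_i H^2}$ that Proposition \ref{bog-res-prop} formally requires for a rank-$r_i$ piece with $r_i < r$; the paper's own proof elides exactly the same point, so this is a shared looseness rather than a defect of your argument relative to the source.
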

\begin{proof}
  We start with the remark, that for a coherent torsion free sheaf $E$, the
  double dual $E\ddual$ satisfies $\Delta(E\ddual) \geq \Delta(E)$, and the
  injection $E \to E\ddual$ has cokernel $T$ of dimension zero.
  Let $C$ be a smooth curve in $| k \cdot H |$. The action of $\SL_2$ on the
  homogeneous space $\pdopi$ allows to move $C$ on $X \times \pdopi$ to a curve
  which does not contain any of the points of the support of $T$. So we may
  assume that all coherent sheaves which appear in the proof are vector
  bundles.

  Denoting the graded objects $E_i/E_{i-1}$ by $F_i$ we obtain as in the proof
  of Proposition \ref{bog-res-prop}
  that 
  \[ \sum_{i=1}^k \frac{-\Delta(F_i)}{\rk(F_i)} =
    \frac{-\Delta(E)}{\rk(E)} +\frac{1}{\rk(E)} \sum_{1 \leq i < j \leq k} 
  \left( \frac{c_1(F_i)}{\rk F_i}-\frac{c_1(F_j)}{\rk F_k} \right)^2 \, .\]
 Since we have a Jordan-H\"older filtration we have that the slope of the
 $F_i$ with respect to $H$ are all the same, or $\left( \frac{c_1(F_i)}{\rk
 F_i}-\frac{c_1(F_j)}{\rk F_k} \right).H=0$. From the Hodge index theorem we
 get that  $\left( \frac{c_1(F_i)}{\rk
      F_i}-\frac{c_1(F_j)}{\rk F_k} \right)^2 \leq 0$.
      We get the inequality
      \[ \sum_{i=1}^k \frac{-\Delta(F_i)}{\rk(F_i)} \leq 
      \frac{-\Delta(E)}{\rk(E)}  \, .\]

      Multiplying by $(r-1)$ this yields 
      \[ \sum_{i=1}^k \frac{r-1}{\rk(F_i)} (-\Delta(F_i)) \leq 
      \frac{(r-1)}{r} (-\Delta(E)) \, .\]
      The summands on the left hand side are all non negative by Proposition
      \ref{bog-res-prop}. The number $k$ is greater than the right hand side
      by assumption, and so we have that $k$ is greater than 
      each summand, that is
      \[ k > \frac{r-1}{\rk(F_i)} (-\Delta(F_i)) >  \frac{\rk(F_i)-1}{\rk(F_i)}
    (-\Delta(F_i)) + \frac{1}{\rk(F_i)(\rk(F_i)-1)H^2} \, .\]
    By Proposition \ref{bog-res-prop} all the $F_i$ are stable when
    restricted to $C \in | k \cdot H|$.
\end{proof}

\section{Existence of orthogonal triples}\label{S5}
Let $\alpha  \in \qdop$ be a rational number. We say that a triple $F_1
\rarpa{\psi} F_0$ is of $\alpha$-orthogonal-type (in short is of type
$\alpha^\bot$), if the following three conditions hold
\begin{itemize}
\item the morphism $\psi$ is surjective,
\item for the ranks we have the relation $\rk(F_1)=2 \cdot \rk(F_0)$, and
\item  the slopes differ by $\alpha$, i.e. $\mu(F_0)-\mu(F_1)=\alpha$.
\end{itemize}

We say that two triples $(E_1 \rarpa \phi E_0)$ and $(F_1
\rarpa{\psi} F_0)$ are orthogonal when the morphism
\[ \xymatrix{(E_1 \otimes F_0) \oplus (E_0 \otimes F_1) \ar[rrrr]^-{\pi=
  -\phi\otimes \id_{F_0} + \id_{E_0} \otimes \psi } &&&& E_0
\otimes F_0 }\]
is surjective, and we have $H^*(X, \ker(\pi))=0$. 

\begin{theorem}\label{thmA}
For any $\alpha \in \qdop_+$ we have the following two equivalent conditions for any
triple
$T=(E_1 \rarpa \phi E_0)$ on our curve $X$:
\begin{itemize}
\item[(1)]  $T$  is $\alpha$-semistable.
\item[(2)] there exists an orthogonal
triple $S=(F_1 \rarpa \psi F_0)$ of type $\alpha^\bot$.
\end{itemize}
\end{theorem}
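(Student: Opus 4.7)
The plan is to route through the dictionary of Section \ref{S2}: by Lemma \ref{lemmaET-T}, $\alpha$-semistability of $T$ is equivalent to $H_\alpha$-semistability of the sheaf $E_T$ on $X \times \pdopi$. The core of the argument is a Riemann--Roch identity on the curve $X$: for any subtriple $T' \subset T$ and any triple $S = (F_1 \rarpa{\psi} F_0)$ of type $\alpha^\bot$ that is orthogonal to $T$, the kernel $K' = \ker(\pi_{T'})$ of the restricted orthogonality map satisfies
\[ \chi(X, K') \;=\; \rk(F_0) \cdot \rk(T') \cdot \bigl(\mu_\alpha(T') - \mu_\alpha(T)\bigr). \]
This follows by expanding $\chi$ of $0 \to K' \to (E'_1 \otimes F_0) \oplus (E'_0 \otimes F_1) \to E'_0 \otimes F_0 \to 0$ using the $\alpha^\bot$-constraints $\rk(F_1) = 2\rk(F_0)$, $\mu(F_0) - \mu(F_1) = \alpha$, together with the fact that orthogonality of $T$ and $S$ itself forces $\chi(K) = 0$, equivalently $\mu(F_0) = (g-1) - \mu_\alpha(T)$.

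Direction $(2) \Rightarrow (1)$ is then short. Since $\psi$ is surjective, so is every $\pi_{T'}$, and $K'$ is a subsheaf of $K = \ker(\pi_T)$. Orthogonality gives $H^0(X, K) = 0$, hence $H^0(X, K') = 0$. If a subtriple $T'$ were to destabilize $T$, i.e.\ $\mu_\alpha(T') > \mu_\alpha(T)$, the identity above yields $\chi(K') > 0$, which on the curve $X$ forces $h^0(K') > 0$, contradicting $K' \subset K$. Therefore $T$ is $\alpha$-semistable.

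Direction $(1) \Rightarrow (2)$ is the substantive direction and combines the restriction theorem \ref{BogStab} with Faltings' Theorem \ref{Fal-thm}. Assuming $E_T$ is $H_\alpha$-semistable, I would apply Theorem \ref{BogStab} with $H = H_\alpha$ and $k \gg 0$ to obtain a smooth curve $C \in |k H_\alpha|$ on which $E_T|_C$ remains semistable; the projection $p|_C \colon C \to X$ is then a finite map of degree $k$. Faltings' theorem on $C$ yields a vector bundle $G$ on $C$ with $H^*(C, E_T|_C \otimes G) = 0$, and the projection formula along $i \colon C \hookrightarrow X \times \pdopi$ gives $H^*(X \times \pdopi, E_T \otimes i_*G) = 0$. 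The final step is to replace $i_*G$ by a locally free sheaf $F_S$ associated to a triple $S = (F_1 \rarpa{\psi} F_0)$ of type $\alpha^\bot$, with the cohomology vanishing preserved; by the identity above, this vanishing then translates back into orthogonality of $T$ and $S$.

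The main obstacle lies precisely in this last construction: producing a locally free $F_S$ coming from a triple $S$ of type $\alpha^\bot$ starting from the torsion pushforward $i_*G$. The natural approach is to fit $i_*G$, possibly after twisting $G$ by a line bundle on $C$ to adjust its degree, into a short exact sequence with auxiliary sheaves $F_{S_j}$ associated to triples of type $\alpha^\bot$ whose numerics have been chosen in advance, and then take the kernel (or a cognate sub/extension) to obtain $F_S$. The integer $k$, the auxiliary numerical data, and the line-bundle twists of $G$ must be coordinated to meet the exact identities $\rk(F_1) = 2\rk(F_0)$ and $\mu(F_0) - \mu(F_1) = \alpha$; this is where the bulk of the technical bookkeeping of the proof will sit, in direct analogy with Popa's effective choice of rank and determinant in the proof of Theorem \ref{Pop-thm}.
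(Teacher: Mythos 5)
Your direction $(2)\Rightarrow(1)$ is essentially the paper's argument (Lemma \ref{32} plus Corollary \ref{thmA1}): the Riemann--Roch identity $\chi(K')=\rk(F_0)\rk(T')(\mu_\alpha(T')-\mu_\alpha(T))$ you state is correct and is just a repackaging of the paper's $\mu(\ker\pi')\le g-1=\mu(\ker\pi)$ computation. For $(1)\Rightarrow(2)$ you also correctly identify the first two ingredients: restrict $E_T$ to a smooth curve $C\in|kH_\alpha|$ using Theorem \ref{BogStab}, then apply Faltings' Theorem \ref{Fal-thm} on $C$ to get $G$ with $H^*(C,E_T|_C\otimes G)=0$.

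However, there is a genuine gap at the step you yourself flag as ``the main obstacle'': converting the torsion sheaf $i_*G$ on $X\times\pdopi$ into a triple $S$ of type $\alpha^\bot$ on $X$. Your proposed fix --- resolving $i_*G$ by locally free sheaves ``associated to triples of type $\alpha^\bot$'' and taking kernels --- is not carried out and it is not clear it can be: even granting a locally free replacement $F_S$ with $H^*(E_T\otimes F_S)=0$, nothing forces $F_S$ to arise from a triple, let alone one satisfying the exact constraints $\rk(F_1)=2\rk(F_0)$ and $\mu(F_0)-\mu(F_1)=\alpha$, and the orthogonality condition is a statement about a specific three-term sequence on $X$, not merely a cohomology vanishing on the surface. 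The paper's resolution (Proposition \ref{thmA2}) avoids the surface entirely at this stage: the restriction $\tilde p=p|_C\colon C\to X$ is a \emph{finite (affine) morphism}, so one tensors the defining sequence $0\to E_T|_C\to \tilde p^*E_1\oplus\tilde p^*E_0\otimes\tilde q^*(V\otimes\Ocal_\pdopi(-1))\to\tilde p^*E_0\to 0$ with $G$ and pushes forward along $\tilde p$; the projection formula and the vanishing of $R^1\tilde p_*$ produce \emph{exactly} the orthogonality sequence on $X$ for the triple $S=(\tilde p_*(\tilde q^*(V\otimes\Ocal_\pdopi(-1))\otimes G)\to\tilde p_*G)$, with kernel $\tilde p_*(E_T|_C\otimes G)$ whose cohomology vanishes by construction. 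The type-$\alpha^\bot$ conditions then come for free from the rank-two bundle $\tilde q^*(V\otimes\Ocal_\pdopi(-1))$ and a slope computation. Without this pushforward idea (or an equivalent substitute) your sketch does not close.
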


The proof of this theorem will occupy this section.
However the implication (2) $\implies$ (1) in Theorem \ref{thmA} is not so
hard to see. We show it in Corollary \ref{thmA1}. The harder inclusion
(1)$\implies$(2) is Proposition \ref{thmA2} which relies on results of all
previous sections.  

\begin{lemma}\label{32}
Let $S=(F_1 \rarpa \psi F_0)$ be a triple of type $\alpha^\bot$. 
Then for any triple $T=(E_1 \rarpa \phi E_0)$ the morphism
\[ \xymatrix{(E_1 \otimes F_0) \oplus (E_0 \otimes F_1) \ar[rrrr]^-{\pi=
  -\phi\otimes \id_{F_0} + \id_{E_0} \otimes \psi } &&&& E_0
  \otimes F_0 }\]
is surjective, and we have $\mu( \ker(\pi))=\mu_\alpha(T)+\mu(F_0)$. 
\end{lemma}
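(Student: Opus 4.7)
The plan is to extract both assertions directly from the short exact sequence
\[ 0 \rar K \rar (E_1 \otimes F_0) \oplus (E_0 \otimes F_1) \rarpa{\pi} E_0 \otimes F_0 \rar 0, \]
once surjectivity of $\pi$ on the right is established.

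For the surjectivity, I would note that the restriction of $\pi$ to the second summand is simply $\id_{E_0} \otimes \psi$. Since $\psi$ is surjective by the definition of $\alpha^\bot$-type and $E_0$ is locally free, this restriction is already surjective, hence so is $\pi$ itself. Observe that this step uses neither $\phi$ nor the rank/slope conditions on $S$, only that $\psi$ is surjective.

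For the slope identity, I would compute $\rk(K)$ and $\deg(K)$ by additivity on the exact sequence. Writing $r_i,d_i$ for the ranks and degrees of $E_i$ and $s_i,e_i$ for those of $F_i$, the relation $s_1 = 2 s_0$ immediately collapses the rank to $\rk(K)=(r_0+r_1)\cdot s_0 = \rk(T)\cdot\rk(F_0)$. The second defining condition $\mu(F_0)-\mu(F_1)=\alpha$, once $s_1=2s_0$ is inserted and denominators cleared, is equivalent to $2e_0-e_1 = 2\alpha s_0$, and this is precisely what one needs in order to rewrite $\deg(K)/\rk(K)$ in the form $\tfrac{d_0+d_1-2\alpha r_0}{r_0+r_1}+\tfrac{e_0}{s_0}=\mu_\alpha(T)+\mu(F_0)$.

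I do not expect any substantive obstacle here: the three numerical conditions defining $\alpha^\bot$-type appear to be tailored precisely so that the factor $2$ in $s_1=2s_0$ cancels the factor $2$ appearing in $-2\alpha r_0$ inside $\mu_\alpha(T)$, making the identity a routine verification. The only care needed is to keep track of which factors of $2$ come from which condition; otherwise it is pure bookkeeping.
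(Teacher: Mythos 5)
Your proposal is correct and follows essentially the same route as the paper: surjectivity of $\pi$ is deduced from the surjectivity of $\id_{E_0}\otimes\psi$ alone, and the slope identity is obtained by additivity of rank and degree on the short exact sequence, with the conditions $\rk(F_1)=2\rk(F_0)$ and $\mu(F_0)-\mu(F_1)=\alpha$ entering exactly where you say they do. The bookkeeping checks out: one finds $\deg(\ker\pi)=\rk(F_0)\bigl(d_0+d_1-2\alpha r_0\bigr)+(r_0+r_1)\deg(F_0)$, which gives the stated slope.
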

\begin{proof}
The surjectivity of $F_1 \to F_0$ implies that $E_0 \otimes F_1 \to E_0
\otimes F_0$ is surjective. Hence $\pi$ is surjective.
Now the computation of the numerical invariants of $\ker(\pi)$ is
straightforward and yields
\[
\begin{array}{ccl}
  \rk(\ker \pi) & =&
  \rk(F_0) \cdot \left( \rk(E_1)+\rk(E_0) \right) \\
  \deg(\ker \pi) & =& 
\rk(F_0) \cdot \left(\deg(E_1)+\deg(E_0)-2\alpha \cdot \rk(E_0) \right)
+\left(\rk(E_1)+\rk(E_0) \right) \cdot \deg(F_0)\\
&=& \rk(\ker \pi)\cdot \left( \mu_\alpha(T) + \mu(F_0) \right) \, .
\end{array}
\]
This is the statement of the lemma.
\end{proof}

\begin{corollary}\label{thmA1}
Let $T=(E_1 \rarpa \phi E_0)$ be a given triple. If $T$ is orthogonal to
the triple $S=(F_1 \rarpa \psi F_0)$ of type $\alpha^\bot$, then 
$T$ is $\alpha$-semistable.
\end{corollary}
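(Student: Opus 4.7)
The strategy is to turn the cohomological condition $H^*(X,\ker\pi)=0$ into a numerical identity for $\mu(\ker\pi)$ via Riemann--Roch on the curve $X$, and then exploit the observation that any sub-triple of $T$ produces a subsheaf of $\ker\pi$ whose slope records the $\alpha$-slope of the sub-triple. The desired inequality $\mu_\alpha(T')\leq\mu_\alpha(T)$ then drops out of the elementary fact that a subsheaf of a bundle with no global sections still has no global sections.

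First, I would observe that $K:=\ker\pi$ is locally free on $X$ (as the kernel of a surjection of locally free sheaves on a smooth curve), so $H^*(X,K)=0$ together with Riemann--Roch forces $\chi(K)=0$, equivalently $\mu(K)=g-1$. Combined with the slope computation of Lemma \ref{32}, this reads
\[ \mu_\alpha(T)+\mu(F_0) \;=\; g-1. \]

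Next, for any sub-triple $T'=(E_1'\rarpa{\phi'}E_0')\hookrightarrow T$, I would form the analogous morphism
\[ \pi'\colon (E_1'\otimes F_0)\oplus(E_0'\otimes F_1)\rar E_0'\otimes F_0, \]
which is the restriction of $\pi$ to the subsheaf on the left, followed by the inclusion $E_0'\otimes F_0\hookrightarrow E_0\otimes F_0$. Surjectivity of $\psi$ makes $\id_{E_0'}\otimes\psi$ surjective, so $\pi'$ is surjective and Lemma \ref{32} applied to $T'$ and $S$ gives $\mu(\ker\pi')=\mu_\alpha(T')+\mu(F_0)$. Moreover $\ker\pi'\hookrightarrow K$ by construction, since $\pi$ restricted to the sub-source factors as $\iota\circ\pi'$ with $\iota$ injective. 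Consequently $H^0(X,\ker\pi')\subset H^0(X,K)=0$, hence $\chi(\ker\pi')=-h^1(X,\ker\pi')\leq 0$, i.e.\ $\mu(\ker\pi')\leq g-1=\mu(K)$. Comparing the two applications of Lemma \ref{32} yields $\mu_\alpha(T')\leq\mu_\alpha(T)$, and Lemma \ref{lemmaET-T} now delivers $\alpha$-semistability of $T$.

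There is no serious obstacle here: each step is an immediate consequence of Lemma \ref{32} and Riemann--Roch on $X$. The only point requiring minimal care is the verification that $\pi'$ surjects onto $E_0'\otimes F_0$ and that $\ker\pi'$ embeds into $K$, which is a one-line diagram chase using the commutativity of the sub-triple square together with the fact that tensoring with the locally free sheaves $F_0$, $F_1$ preserves inclusions.
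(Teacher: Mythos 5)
Your proof is correct and follows essentially the same route as the paper: apply Riemann--Roch to get $\mu(\ker\pi)=g-1$ from $H^*(X,\ker\pi)=0$, embed $\ker\pi'$ into $\ker\pi$ to force $H^0(X,\ker\pi')=0$ and hence $\chi(\ker\pi')\leq 0$, then translate $\mu(\ker\pi')\leq\mu(\ker\pi)$ back into $\mu_\alpha(T')\leq\mu_\alpha(T)$ via Lemma \ref{32} and conclude with Lemma \ref{lemmaET-T}. The only cosmetic difference is that you spell out the surjectivity of $\pi'$ and the identity $\mu_\alpha(T)+\mu(F_0)=g-1$ explicitly, which the paper leaves implicit.
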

\begin{proof}
Assume the triple $S$ is of type $\alpha^\bot$ and orthogonal to $T$. For
any sub object $T'=(E_1' \rarpa {\phi'} E_0') \subset T$ we obtain the
following commutative diagram with injective vertical arrows:
\[ \xymatrix{
0 \ar[r] & \ker(\pi') \ar[r]\ar[d] &
(E_1' \otimes F_0) \oplus (E_0' \otimes F_1) \ar[d]
\ar[rrr]^-{\pi'=-\phi'\otimes \id_{F_0} + \id_{E_0'} \otimes \psi } &&&
  E_0' \otimes F_0 \ar[r]\ar[d] & 0\\
0 \ar[r] & \ker(\pi) \ar[r] &
(E_1 \otimes F_0) \oplus (E_0 \otimes F_1)
\ar[rrr]^-{\pi=-\phi\otimes \id_{F_0} + \id_{E_0} \otimes \psi } &&&
E_0 \otimes F_0 \ar[r] & 0\\
}\]
We study now the short exact sequence $0 \to \ker \pi' \to \ker \pi \to Q \to
0 $ on the curve $X$. Since $S$ is orthogonal to $T$, we have $H^*(X, \ker
\pi)=0$. Thus $\mu(\ker \pi)=g-1$ by the Riemann-Roch Theorem. Since
$H^0(X,\ker \pi')$ is a subspace of $H^0(X, \ker \pi)$ this vector space is
zero. Hence, $\chi(\ker \pi') \leq 0$. We deduce that $\mu(\ker \pi') \leq g-1$. The inequality $\mu(\ker \pi')
\leq \mu(\ker \pi)$ translates as $\mu_\alpha(T') \leq \mu_\alpha(T)$ by
Lemma \ref{32}.
\end{proof}

\begin{proposition}\label{thmA2}
Let $T=(E_1 \rarpa \phi E_0)$ be a $\alpha$-semistable triple.
There exists a triple $S=(F_1 \rarpa \psi F_0)$ of type $\alpha^\bot$
which is orthogonal to $T$.
\end{proposition}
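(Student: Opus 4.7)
The plan is to transport the assertion to the surface $X \times \pdopi$ via the correspondence of Section \ref{S2}, reduce to a smooth curve $C$ using the Bogomolov restriction theorem of Section \ref{S4}, apply Faltings' theorem on $C$, and translate the resulting orthogonal bundle back into a triple $S$ of type $\alpha^\bot$ on $X$.

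By Lemma \ref{lemmaET-T} the $\alpha$-semistability of $T$ is equivalent to $E_T$ being $H_\alpha$-semistable on $X \times \pdopi$. I would choose an integer $k$ large enough that $kH_\alpha$ is an integral Cartier class satisfying hypotheses (1) and (2) of Theorem \ref{BogStab} for $E_T$; then for a general smooth $C \in |kH_\alpha|$, the restriction $E_T|_C$ is semistable on $C$. On the curve $C$, I would apply Faltings' theorem (Theorem \ref{Fal-thm}), in the stronger form of Popa (Theorem \ref{Pop-thm}) so as to fix rank and determinant in advance. This produces a vector bundle $G$ on $C$ with $H^*(C, E_T|_C \otimes G) = 0$, where the rank and determinant of $G$ are chosen so that the eventual triple $S=(F_1 \rarpa{\psi} F_0)$ on $X$ will satisfy the $\alpha^\bot$-type conditions $\mu(F_0) = g-1-\mu_\alpha(T)$, $\rk(F_1) = 2\rk(F_0)$, and $\mu(F_0)-\mu(F_1)=\alpha$.

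The heart of the argument is to transfer $G$ from $C$ to a triple-type sheaf on $X \times \pdopi$. By the projection formula, $H^*(X \times \pdopi, E_T \otimes i_* G) = H^*(C, E_T|_C \otimes G) = 0$, where $i \colon C \hookrightarrow X \times \pdopi$, so $i_* G$ is an orthogonal sheaf on $X \times \pdopi$; however $i_* G$ is torsion and not of triple type. I would replace $i_* G$ by a locally free sheaf $E_S$ fitting into an exact sequence $0 \to p^*F_0 \otimes q^*\Ocal_\pdopi(-2) \to E_S \to p^*F_1 \to 0$ for suitable $F_0, F_1$ on $X$, engineered so that $E_S$ is cohomologically equivalent to $i_* G$ as far as pairing with $E_T$ is concerned. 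By the correspondence of Section \ref{S2}, such $E_S$ then arises from a triple $S=(F_1\rarpa{\psi} F_0)$ on $X$. The vanishing $R\Gamma(X \times \pdopi, E_T \otimes E_S) = 0$ translates, via the short exact sequences of Section \ref{S2} and the projection formula, into the surjectivity of $\pi$ and the vanishing $H^*(X, \ker \pi) = 0$, i.e.\ into the orthogonality of $T$ and $S$.

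The main obstacle is the third step: the construction of a triple-type $E_S$ matching both the numerical invariants (enforcing $\alpha^\bot$-type) and the cohomological interaction with $E_T$. Popa's control over the rank and determinant of $G$ handles the numerical matching, while the $\SL_2$-equivariance of the $\pdopi$-factor (already exploited in Lemma \ref{lemmaET-T}) provides the flexibility to move $C$ generically so that the resolution of $i_* G$ can be arranged within triple-type sheaves.
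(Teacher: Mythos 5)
Your first half matches the paper exactly: pass to $E_T$ on $X \times \pdopi$ via Lemma \ref{lemmaET-T}, restrict to a general smooth $C \in |m\cdot H_\alpha|$ using Theorem \ref{BogStab}, and apply Faltings' theorem on $C$ to get $G$ with $H^*(C, E_T|_C \otimes G)=0$. The divergence --- and the gap --- is in the final step, which you yourself flag as ``the main obstacle'': you propose to replace $i_*G$ by a locally free sheaf $E_S$ of triple type on $X \times \pdopi$ that is ``cohomologically equivalent to $i_*G$ as far as pairing with $E_T$ is concerned.'' This is not a construction. A sheaf of triple type must fit into $0 \to p^*F_0 \otimes q^*\Ocal_\pdopi(-2) \to E_S \to p^*F_1 \to 0$, which is a very restrictive condition, and neither Popa's control of $\rk(G)$ and $\det(G)$ nor the $\SL_2$-action on the $\pdopi$-factor gives you a reason why a resolution of the torsion sheaf $i_*G$ can be arranged within this class while preserving the pairing with $E_T$. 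Moreover, even granting such an $E_S$, your last claim --- that $R\Gamma(X\times\pdopi, E_T\otimes E_S)=0$ translates into surjectivity of $\pi$ plus $H^*(X,\ker\pi)=0$ --- is unjustified: surjectivity of $\pi$ comes from surjectivity of $\psi\colon F_1 \to F_0$ (Lemma \ref{32}), and nothing in your setup produces that surjection.

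The paper avoids the surface entirely at this point. It first tensors the presentation
\[ 0 \to E_T|_C \to \tilde p^*E_1 \oplus \bigl(\tilde p^*E_0 \otimes \tilde q^*(V\otimes\Ocal_\pdopi(-1))\bigr) \to \tilde p^*E_0 \to 0 \]
with $F$ on $C$ and then pushes forward along the \emph{finite} morphism $\tilde p\colon C \to X$. Since $\tilde p$ is affine, $\tilde p_*$ is exact, so the sequence stays exact on $X$; the projection formula identifies the last two terms as $E_1\otimes \tilde p_*F \oplus E_0\otimes \tilde p_*(\tilde q^*(V\otimes\Ocal_\pdopi(-1))\otimes F)$ and $E_0\otimes \tilde p_*F$. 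Setting $F_0=\tilde p_*F$ and $F_1=\tilde p_*(\tilde q^*(V\otimes\Ocal_\pdopi(-1))\otimes F)$ with $\psi$ the (surjective, again by exactness of $\tilde p_*$) induced map gives the triple $S$ directly, with $\ker\pi = \tilde p_*(E_T|_C\otimes F)$, whose cohomology vanishes by construction. If you want to repair your argument, replace your third step by this push-forward; as written, the proof is incomplete.
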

\begin{proof}
  Let $T=(E_1 \rarpa \phi E_0)$ be a $\alpha$-semistable triple.
  We deduce by Lemma \ref{lemmaET-T} that there exists a vector bundle $E_T$
  on the surface $X \times \pdopi$ 
  which is $H_\alpha$-semistable and in the following pull back diagram.
  \[ \xymatrix{ E_T \ar[r]\ar[d] & p^*E_1 \ar[d]^{p^*\phi} \\
p^*E_0 \otimes q^*(V \otimes \Ocal_\pdopi(-1)) \ar[r]^-\gamma & p^*E_0}\]
  Having in mind that $\gamma$ is surjective, we may rewrite this as a short
  exact sequence
  \[ \xymatrix{0 \ar[r] & E_T \ar[r] &  p^*E_1 \oplus p^*E_0 \otimes q^*(V
\otimes \Ocal_\pdopi(-1)) \ar[rr]^-{-p^*\phi + \gamma} && p^*E_0 \ar[r] & 0\,
.}\]
Now since $E_T$ is $H_\alpha$-semistable, by Theorem
\ref{BogStab} the restriction  of $E_T$ to a smooth curve $C$ in the linear
system $| m \cdot H_ \alpha |$ is also semistable for $m \gg 0$. Denoting the restriction of
$p$, and $q$ to $C$ by $\tilde p$, and $\tilde q$, respectively.
We remark that $\tilde p$ is an affin morphism.

We obtain the
short exact sequence
\[ \xymatrix{0 \ar[r] & E_T|_C \ar[r] &  \tilde p^*E_1 \oplus \tilde p^*E_0
\otimes \tilde q^*(V
\otimes \Ocal_\pdopi(-1)) \ar[rr]^-{-p^* \tilde \phi + \gamma} &&
\tilde p^*E_0 \ar[r] & 0\, .}\]

Now by Theorem \ref{Fal-thm} there exists a vector bundle $F$ on $C$ such that 
$H^*(X,  E_T|_C \otimes F)=0$. Tensoring the last exact sequence with $F$
gives after a push forward via $\tilde p$ the following vertical short exact sequence
on $X$.
\[ \xymatrix{0 \ar[d] \\
    \tilde p_*(E_T|_C \otimes F) \ar[d] \\
\tilde p_*(
\left( \tilde p^*E_1 \oplus \tilde p^*E_0
\otimes \tilde q^*(V
\otimes \Ocal_\pdopi(-1)) \right) \otimes F)
\ar[d]^-{-p^* \tilde \phi + \gamma} \ar@{=}[r]^-\sim &
E_1 \otimes \tilde p_*F  \oplus E_0 \otimes \tilde p_*( \tilde q^*(V \otimes
\Ocal_\pdopi(-1)) \otimes F) \ar[d]\\
\tilde p_*(\tilde p^*E_0 \otimes F) \ar[d] \ar@{=}[r]^-\sim
& E_0 \otimes \tilde p_*F \\ 0}\]
The horizontal isomorphisms follow from the projection formula.
Thus, defining the triple $S$ by $S=(F_1 \rarpa \psi F_0) := \tilde p_*( \tilde q^*(V \otimes
\Ocal_\pdopi(-1)) \otimes F \to F)$ gives the desired orthogonal triple.
The computation that $S$ is of type $\alpha^\bot$ is straightforward.
\end{proof}

\subsection{Effective bounds for orthogonal triples}
In the proof of Proposition \ref{thmA2} we did not used that for the curve
$C \in | m \cdot H_\alpha |$ the number $m$ can be given explicitly, as well
as the rank and the determinant of the vector bundle $F$ on $C$. Using these two
facts which follow from Theorems \ref{BogStab} and \ref{Pop-thm} we obtain:

\begin{proposition}\label{thmB2}
  Let $\alpha=\frac{a}{b}$ be a positive rational number with coprime $a,b \in
  \ndop$, and a line bundle $L$ on $X$ of degree one.
  Then for a holomorphic triple $T=(E_1 \to E_0)$ of rank $(r_1,r_0)$ and
  degree $(d_1,d_0)$ on $X$ we have the equivalence
  \begin{itemize}
    \item[(1)]  $T$  is $\alpha$-semistable.
    \item[(2)] there exists an orthogonal
      triple $S=(F_1 \rarpa \psi F_0)$ of type $\alpha^\bot$ with
      \[
        \begin{array}{rclcrcl} 
          \rk(F_0) & =&  m b(r_0+r_1)^2  & \qquad & \det(F_0)  & \cong &  L^{\otimes
        d} \\
        \rk(F_1) & = & 2m b(r_0+r_1)^2  & \qquad &  \det(F_1)  & \cong &
        L^{\otimes
        2d- 2(r_0+r_1)^2ma} \\
     \end{array} \]
     for $m = \max\{ 4(r_1d_0-r_0d_1) , \lceil \frac{2g}{a} \rceil \} $, and\\
     $d=m(r_0+r_1) \left( b(r_0+r_1)(g-1)
     -b(d_0+d_1) +2ar_0 \right) $.
  \end{itemize}
\end{proposition}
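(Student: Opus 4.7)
The plan is to retrace the proof of Proposition~\ref{thmA2} while keeping every step effective. The implication (2)$\Rightarrow$(1) is exactly Corollary~\ref{thmA1}, so my focus is on (1)$\Rightarrow$(2).

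Given the $\alpha$-semistable triple $T=(E_1\rarpa{\phi} E_0)$, I form the sheaf $E_T$ on $X\times\pdopi$ as in Section~\ref{S2}. The numerical invariants computed there give $c_1(E_T)^2=-4r_0(d_0+d_1)\pt$ and $c_2(E_T)=2d_0-2r_0(d_0+d_1)$, so a direct calculation yields
\[ -\Delta(E_T)\;=\;4(r_1d_0-r_0d_1)\, . \]
With the integral ample class $H=bH_\alpha=bF_q+aF_p$, satisfying $H^2=2ab$, condition (2) of Theorem~\ref{BogStab} applied with $k=m$ reads $m>\frac{r-1}{r}\cdot 4(r_1d_0-r_0d_1)+\frac{1}{2ab(r-1)r}$ (with $r=r_0+r_1$), and is certainly satisfied by $m\geq 4(r_1d_0-r_0d_1)$. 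The second lower bound $m\geq\lceil 2g/a\rceil$ should come from ensuring that $|mbH_\alpha|$ actually contains smooth curves, via a Bertini argument for divisors of bidegree $(mb,ma)$ on $X\times\pdopi$. Theorem~\ref{BogStab} then furnishes a smooth $C\in|mbH_\alpha|$ with $E_T|_C$ semistable.

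On $C$ I invoke Theorem~\ref{Pop-thm}, producing a vector bundle $F$ of rank $(r_0+r_1)^2$ and determinant freely prescribable of degree $(r_0+r_1)^2(g(C)-1)-(r_0+r_1)\deg(E_T|_C)$, satisfying $H^*(C,E_T|_C\otimes F)=0$. I then repeat the push-forward construction of Proposition~\ref{thmA2}, setting $F_0=\til p_*F$ and $F_1=\til p_*(\til q^*(V\otimes\Ocal_\pdopi(-1))\otimes F)$. Since $\til p:C\to X$ is finite of degree $C\cdot F_p=mb$, the ranks come out immediately as $\rk(F_0)=mb(r_0+r_1)^2$ and $\rk(F_1)=2mb(r_0+r_1)^2$, as claimed. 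For the determinants, I compute $g(C)$ by adjunction ($2g(C)-2=C^2+K_{X\times\pdopi}\cdot C=2m^2ab+(2g-2)mb-2ma$) and use $\chi$-invariance under $\til p_*$ to express $\deg F_0$ in terms of $\deg F$; after substituting Popa's value and simplifying, the expression should collapse exactly to $d=m(r_0+r_1)(b(r_0+r_1)(g-1)-b(d_0+d_1)+2ar_0)$. An analogous computation using $\deg\til q^*\Ocal_\pdopi(-1)=-ma$ yields $\deg F_1=2d-2(r_0+r_1)^2ma$, and the sanity check $\mu(F_0)-\mu(F_1)=a/b=\alpha$ confirms that $S$ is of type $\alpha^\bot$.

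The main obstacle I anticipate is controlling both determinants simultaneously from the single free parameter $\det F$: Popa's theorem lets me arrange $\det F_0\cong L^{\otimes d}$, but I still need $\det F_1$ to be exactly $L^{\otimes 2d-2(r_0+r_1)^2ma}$, not merely of that degree. This should follow formally from the pushforward determinant formula $\det\til p_*\Gcal=\mathrm{N}_{\til p}(\det\Gcal)\otimes(\det\til p_*\Ocal_C)^{\rk\Gcal}$ applied to $\Gcal=\til q^*(V\otimes\Ocal_\pdopi(-1))\otimes F$, once I observe that $\det F_1$ is determined by $\det F$ (hence by $\det F_0$) up to a line bundle independent of the choice of $F$, arising from the rigid map $\til q^*(V\otimes\Ocal_\pdopi(-1))\otimes F\to F$ underlying $\psi$. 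Verifying this rigidity and pinning down the universal correction line bundle is the calculation I expect to spend the most effort on.
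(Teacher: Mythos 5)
Your proposal follows the paper's proof essentially verbatim: restrict $E_T$ to a smooth curve $C\in|mbH_\alpha|=|L_{a,b}^{\otimes m}|$ via Theorem \ref{BogStab} using $-\Delta(E_T)=4(r_1d_0-r_0d_1)$, apply Popa's Theorem \ref{Pop-thm} on $C$, and push the resulting $F$ forward along $\tilde{p}$ exactly as in Proposition \ref{thmA2}. The one step you flag as open --- pinning down $\det F_1$ --- resolves immediately because $\Ocal(C)\cong p^*L^{\otimes ma}\otimes q^*\Ocal_{\pdopi}(mb)$, which forces $N_{\tilde{p}}\bigl(\tilde{q}^*\Ocal_{\pdopi}(-2)\bigr)\cong L^{\otimes -2ma}$ and hence $\det F_1\cong(\det F_0)^{\otimes 2}\otimes L^{\otimes -2ma(r_0+r_1)^2}$; the paper's own proof is in fact silent on this point, so your treatment is, if anything, the more careful one.
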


\begin{proof} We have only to show that (1) implies (2). The opposite
  implication follows from Theorem \ref{thmA}. So we assume that $T$ is
  $\alpha$-semistable.
  We consider the ample line $L_{a,b} = p^*L^{\otimes a} \otimes q^*\Ocal_\pdopi(b)$.
  By Lemma \ref{lemmaET-T}. The sheaf $E_T$ on $X \times \pdopi$ is semistable
  with respect to the polarization $L_{a,b}$. We computed the discriminant of
  $E_T$ to be $\Delta(E_T)=4(r_0d_1-r_1d_0)$. So our choice of $m$ guarantees
  that $L_{a,b}^{\otimes m}$ is globally generated, and the restriction of
  $E_T$ to the restriction of a smooth divisor $C$ in the associated linear
  system is also semistable by Theorem \ref{BogStab}. By Theorem \ref{Pop-thm}
  there exists a vector bundle $F$ on $C$ of rank $(r_0+r_1)^2$ and degree
  $\deg(F)=(r_0+r_1)^2(g_C-1)-(r_0+r_1)\deg(E_T)$. 
  Now we proceed like in the proof of Proposition \ref{thmA2}. That is we have
  $F_0=p_*(F)$ and $F_1= p_*(F \otimes q^*(V \otimes \Ocal_\pdopi(-1)))$.
  Indeed, since we can choose the determinant of $F$ arbitrary, we obtain
  $\det(F_0)= L^{\otimes d}$.
\end{proof}

\subsection{A homological view on orthogonal triples}
We follow here the notation of Weibel in \cite{Weibel}.
Considering two holomorphic triples $T=(E_1 \rarpa \phi E_0)$ and $S= (F_1
\rarpa \psi F_0)$ as chain complexes, then we see
that the {\em brutal truncation} $\sigma_{< 2}( T \otimes S)$ 
of the tensor product is the complex
\[ (E_1 \otimes F_0) \oplus (E_0 \otimes F_1) \rarpa \pi E_0 \otimes F_0 \]
we investigate in this Section.
Even though the brutal truncation is less common, we can describe the
orthogonality by
\[ H^*(X,H_1(\sigma_{< 2}( T \otimes S)))=0 \quad \mbox{, and} 
\quad H_0(\sigma_{< 2}( T \otimes S))=0 \, .\]
When $S$ is of type $\alpha^\bot$ the description becomes
\[ H^*(X,H_*(\sigma_{< 2}( T \otimes S)))=0 \, .\]

\section{The generalized theta line bundle for holomorphic triples}\label{S6}
\subsection{The theta line bundle on the stack} Let $T=(\Ecal_1 \rarpa \phi \Ecal_0)$ be
a triple on $X \times S$ with projections $\pr_X$ and $\pr_S$. For two fixed
classes $c_1=[F_1]$ and $c_0=[F_0]$ in the Grothendieck group $K(X)$ we define the
theta divisor $\Ocal_{S,T}(\Theta_{c_1,c_0})$ to be the determinant of cohomology
\[ 
  \scriptstyle{\Ocal_{S,T}(\Theta_{c_1,c_0}) = \det(R^*\pr_{S*}(\Ecal_0 \otimes \pr_X^*F_0 ))
  \otimes \det(R^*\pr_{S*}(\Ecal_1 \otimes \pr_X^*F_0 ))\inv
\otimes \det(R^*\pr_{S*}(\Ecal_0 \otimes \pr_X^*F_1 ))\inv \, .}
\]
Indeed, this line bundle depends only on the classes $c_i$ of the two bundles
$F_i$ in $K(X)$. Using the projection formula we see the following relation
between the theta line bundle for $T$ and $\pr^*L \otimes T= ( \pr_S^*L \otimes
\Ecal_1 \to \pr_S^*L \otimes \Ecal_0)$ for any line bundle $L$ on our base
scheme $S$:
\[ \Ocal_{S,\pr^*L \otimes T}(\Theta_{c_1,c_0}) =
    L^{\otimes \chi_{0,0}-\chi_{1,0}-\chi_{0,1}} \otimes
  \Ocal_{S,T}(\Theta_{c_1,c_0}) \]
  where $\chi_{i,j} = \chi_{X_0}(\Ecal_i \otimes \pr_X^* F_j)$ is the Euler
  characteristic of the vector bundle  $\Ecal_i \otimes \pr_X^* F_j$ on a
  fixed fiber $X_0$ of $\pr_S$. If we fix $\lambda \in k^*$ and denote the
  multiplication by $\lambda$ with $m_\lambda: T \to T$, then the induced
  isomorphism of $\Ocal_{S,T}(\Theta_{c_1,c_0}) \to
  \Ocal_{S,T}(\Theta_{c_1,c_0})$ is given by multiplication with
  $\lambda^{\otimes \chi_{0,0}-\chi_{1,0}-\chi_{0,1}}$.

  So by \cite[Th\'eor\`em  2.3]{DN} the line bundle
  $\Ocal(\Theta_{c_1,c_0})$ descends to the stable locus of the moduli space of
  triples, when the number $\chi_{0,0}-\chi_{1,0}-\chi_{0,1}$ is zero. We will
  assume from now on that $\chi_{0,0}-\chi_{1,0}-\chi_{0,1}=0$, and that we
  have an surjection $F_1 \rarpa \psi F_0$. To such a datum we will construct
  a theta divisor $\Theta_R$ where $R=(F_1 \rarpa \psi F_0)$.
  For any triple $T=(\Ecal_1 \rarpa \phi \Ecal_))$ on $X \times S$ we obtain a
  surjection
  \[ \xymatrix{ \pi: \Ecal_1 \otimes \pr_S^* F_0 \oplus \Ecal_0 \otimes \pr_S^* F_1 
  \ar[rrr]^-{-\phi \otimes \id_{F_0}+\id_{E_0}\otimes \psi } &&&  \Ecal_0 \otimes \pr_S^* F_0  }\]
  Now we define $\Theta_R$ to be the theta divisor associated to the vector
  bundle $\ker(\pi)$ as in the article \cite{DN} of Drezet and Narasimhan.
  This way we obtain a Cartier divisor  $\Theta_R$ on the moduli space of
  stable triples. The closed points of this divisor are give as
  \[ \Theta_R(k) =  \{ S= (E_1 \rarpa \phi E_0) |\, S \mbox{ is not orthogonal
  to } R \, \} .\]

  \subsection{Base point freeness}
  Proposition \ref{thmB2} yields a base point free result for these
  generalized theta divisors. Let $M=M^\alpha_{(r_1,r_0,d_1,d_0)}$ be the moduli
  space of $\alpha$-semistable triples $T=(E_1 \rarpa \phi E_0)$ with
  $\rk(E_i)=r_i$, and $\deg(E_i)=d_i$, then the theta divisor $\Theta_R$ is
  base point free for $R=(F_1 \rarpa \psi F_0)$ for sheaves $F_i$ as in
  Proposition \ref{thmB2}.

\subsection{A criterion for ampleness}
We fix the numerical date $c=(r_0,r_1,d_0,d_1,\alpha)$ and obtain a map
from semistable $c$-triples on $X$ to semistable $\tilde c$-triples on the
curve $C \subset X \times \pdopi$ in the linear system $|m \cdot H_\alpha |$.
Since the theta divisor on the moduli space of S-equivalence classes of rank
$r_0+r_1$ bundles on $C$ is ample, we can hope that this also holds for its
pull back to the moduli space of S-equivalence classes of $c$-triples.
However, it turns out that we need an additional condition to ensure this.

\begin{proposition}\label{prop-ample}
  Let $R=(F_1 \rarpa \psi F_0)$ be a triple satisfying:\\
  \begin{tabular}{lll}
    $\qquad $ &(1) &  $\psi$ is surjective,\\
              &(2) & $\mu(F_0)-\mu(F_1)=\alpha$,\\
              &(3) & $\rk(F_1) =2\rk(F_0)$, and\\
              &(4) & $\mu(F_0)+\mu_\alpha = g-1$.\\
  \end{tabular}\\
  If for all semistable triples $T=(E_1 \to E_0)$ we have $\Hom(E_0,E_1)=0$,
  then the divisor $\Theta_R$ is ample.
\end{proposition}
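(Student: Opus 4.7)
\emph{Proof plan.}
The plan is to realize $\Theta_R$ as the pullback of the classical Drezet--Narasimhan theta divisor on a moduli space of semistable sheaves on a smooth curve $C \subset X \times \pdopi$ via a finite morphism. Since the Drezet--Narasimhan theta divisor is ample, and finite pullbacks of ample divisors are ample, this will yield ampleness of $\Theta_R$ on $M = M^\alpha_{(r_1,r_0,d_1,d_0)}$.

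First, I would perform the reduction to a curve. Fix $m \gg 0$ (as in Theorem \ref{BogStab}) and a general smooth $C \in |m \cdot H_\alpha|$. Given an $\alpha$-semistable triple $T = (E_1 \to E_0)$, form $E_T$ on $X \times \pdopi$ as in Section \ref{S2}; by Theorem \ref{BogStab}, $E_T|_C$ is semistable of rank $r_0+r_1$ and constant degree $\delta$, and its Jordan--H\"older filtration is the restriction of that of $E_T$. This defines a set-theoretic map on $S$-equivalence classes
\[ \rho : M \longrightarrow \U_C(r_0+r_1,\delta), \qquad [T] \longmapsto [E_T|_C], \]
which promotes to a morphism of schemes via the universal family. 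Second, I would identify $\Theta_R = \rho^* \Theta_F$ for a suitable bundle $F$ on $C$. Running the construction in the proof of Proposition \ref{thmA2} in reverse, given $F$ on $C$, the pair
\[ (F_1,F_0) \;=\; \bigl(\tilde p_*(\tilde q^*(V\otimes \Ocal_\pdopi(-1))\otimes F),\; \tilde p_* F\bigr) \]
is a triple of $\alpha$-orthogonal type satisfying (1)--(3); condition (4), $\mu(F_0) + \mu_\alpha = g-1$, is precisely the Euler-characteristic normalization $\chi_{0,0}-\chi_{1,0}-\chi_{0,1}=0$ required for the theta line bundle to descend to the coarse moduli space. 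Using the presentation
\[ 0 \to E_T \to p^*E_1 \oplus p^*E_0 \otimes q^*(V \otimes \Ocal_\pdopi(-1)) \to p^*E_0 \to 0 \]
together with the projection formula, one checks that the determinant-of-cohomology bundle defining $\Theta_R$ agrees with $\rho^*\Theta_F$.

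Third, and most importantly, I would prove that $\rho$ is a finite morphism; this is where the hypothesis $\Hom(E_0,E_1)=0$ enters. Suppose $T, T'$ are polystable triples with $E_T|_C \cong E_{T'}|_C$ in $\U_C(r_0+r_1,\delta)$. For $m \gg 0$, a Mehta--Ramanathan-type injectivity of restriction for polystable sheaves on $X \times \pdopi$ gives $E_T \cong E_{T'}$ on the surface. Applying $p_*$ and $R^1p_*$ to $0 \to p^*E_0 \otimes q^*\Ocal(-2) \to E_T \to p^*E_1 \to 0$ recovers $E_1$ and $E_0$ from $E_T$, and the connecting homomorphism of the sequence returns the map $\phi$. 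The subtle point is that an arbitrary isomorphism $E_T \cong E_{T'}$ need not preserve the two-step filtration: by the K\"unneth decomposition, $\Hom_{X\times\pdopi}(p^*E_0 \otimes q^*\Ocal(-2), p^*E_1')$ equals $\Hom_X(E_0, E_1')\otimes H^0(\pdopi,\Ocal(2))$, so a nonzero element of $\Hom(E_0, E_1')$ would produce an extra off-diagonal isomorphism that cannot come from a morphism of triples. The hypothesis $\Hom(E_0, E_1)=0$, applied to the natural semistable triple assembled from $T$ and $T'$ (their polystable direct sum, which is again $\alpha$-semistable and of the same numerical type), forces these off-diagonal contributions to vanish. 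Hence every isomorphism $E_T \cong E_{T'}$ descends to an isomorphism $T \cong T'$, so $\rho$ is injective on $S$-equivalence classes; since both sides are projective of the same finite type and $\rho$ has finite fibres, it is a finite morphism.

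Concluding, $\Theta_F$ is ample on $\U_C(r_0+r_1,\delta)$ by \cite{DN}, and $\rho$ is finite, so $\Theta_R = \rho^*\Theta_F$ is ample on $M$. \emph{Main obstacle.} The delicate point is Step~3, namely the finiteness of $\rho$; the role of the hypothesis $\Hom(E_0,E_1)=0$ is to kill precisely those K\"unneth contributions to $\Hom(E_T,E_{T'})$ that would otherwise allow distinct polystable triples to share the same image on the surface and \emph{a fortiori} on $C$, leaving $\Theta_R$ only semiample rather than ample.
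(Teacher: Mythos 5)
Your global architecture coincides with the paper's: both realize $\Theta_R$ as the pullback, under the map $\rho$ sending $T$ to $E_T|_C \in \U_C(r_0+r_1,\delta)$, of the Drezet--Narasimhan theta divisor, and reduce ampleness to finiteness of $\rho$. The divergence --- and the gap --- lies in how that finiteness is established. The paper proves that $\rho$ is \emph{infinitesimally} injective: it factors $\rho$ into three steps, handles the restriction step by the vanishing $H^1(X\times\pdopi,\End(E_T)(-m))=0$ for $m\gg 0$ uniformly over the bounded family (so that $\Ext^1(E_T,E_T)\to\Ext^1(E_T|_C,E_T|_C)$ is injective), and uses the hypothesis $\Hom(E_0,E_1)=0$ exactly once, to kill the tangent space $H^0(\pdopi,\Ocal_\pdopi(2))\otimes\Hom(E_0,E_1)$ of the Quot scheme at $[E_T\to p^*E_1]$, i.e.\ the first-order deformations of the extension structure that do not move $E_T$. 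You instead aim at set-theoretic injectivity on polystable representatives, and your pivotal step --- that $E_T|_C\cong E_{T'}|_C$ forces $E_T\cong E_{T'}$ on the surface --- is attributed to ``a Mehta--Ramanathan-type injectivity of restriction for polystable sheaves''. No such theorem is available: the Mehta--Ramanathan circle of results (and Theorem \ref{BogStab} here) asserts that restriction to a general high-degree curve \emph{preserves} semistability and Jordan--H\"older filtrations; it says nothing about injectivity of restriction on isomorphism classes, which is precisely the content one has to prove. The step can be repaired --- e.g.\ show that $\Hom(E_T,E_{T'})\to\Hom(E_T|_C,E_{T'}|_C)$ is bijective for $m\gg 0$ from the vanishing of $H^0$ and $H^1$ of $\shom(E_T,E_{T'})(-m)$, uniformly over the bounded family, and then check that a homomorphism restricting to an isomorphism on $C$ is an isomorphism --- but as written it is an unsupported appeal to a nonexistent result.

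A secondary problem is how you invoke the hypothesis. To exclude off-diagonal components of an isomorphism $E_T\cong E_{T'}$ you need the cross terms $\Hom(E_0,E_1')$ to vanish for two \emph{different} triples $T$ and $T'$; the hypothesis gives $\Hom(E_0,E_1)=0$ for a single semistable triple, and your device of applying it to $T\oplus T'$ changes the numerical type (ranks and degrees double), so it is not covered by the hypothesis as it is actually verified in the application (Corollary \ref{c72} concerns only triples of the fixed type). The paper's infinitesimal argument sidesteps this entirely, since first-order deformations of a single $T$ only ever involve $\Hom(E_0,E_1)$ for that same $T$. On the positive side, your identification of $\Theta_R$ with $\rho^*\Theta_F$ via the presentation of $E_T$ and the projection formula, and your reading of condition (4) as the normalization $\chi_{0,0}-\chi_{1,0}-\chi_{0,1}=0$ needed for descent, are consistent with Section \ref{S6} and with the construction in Proposition \ref{thmA2}.
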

\begin{proof}
  The construction of the theta divisor uses the morphism
  \[ \rho : \{ \mbox{semistable triples of type } c \mbox{ on } X \} \to \{ 
  \mbox{ semistable vector bundles on } C \} \]
  where $C$ is a curve in the linear system $|m \cdot H_\alpha| $.
  Since the theta divisor is ample on the moduli space of semistable bundles
  on $C$, it suffices to show that $\rho$ is infinitesimal injective.
  The morphism $\rho$ can be decomposed into three steps:

  \begin{tabular}{cp{12cm}}
    (1) & We assign the triple $T=(E_1 \to E_0) $ on $X$  the short exact
    sequence  $(0 \to E_0
    \boxtimes \Ocal_\pdop(-2) \to E_T \to p^*E_1 \to 0)$ on $X \times
    \pdopi$.\\
    (2) & We go from $(0 \to E_0
        \boxtimes \Ocal_\pdop(-2) \to E_T \to p^*E_1 \to 0)$ to the vector
        bundle $E_T$ on $X \times \pdopi$.\\
    (3) & We go to the restriction of $E_T$ on the curve $C$. 
  \end{tabular}

  The first step is an equivalence as we have seen in part \ref{22}. To see
  that the third step is infinitesimal injective, we remark first that we can
  choose a curve $C$ in a linear system $|m \cdot H_\alpha|$ for $m \gg 0$.
  However, if $H^1(X \times \pdopi, \End(E_T)(-m) )=0$, then the tangent map
  $\Ext^1(E_T,E_T) \to \Ext^1(E_T|_C,E_T|_C)$ is injective. Since the
  semistable triples form a bounded family we can choose an integer $m$ which
  works for all triples of the fixed type $c$.

  Now study infinitesimal injectivity of step (2). Assume that we have a
  deformation of the short exact sequence $(0 \to E_0
      \boxtimes \Ocal_\pdop(-2) \to E_T \to p^*E_1 \to 0)$ over $X \times
      \pdopi \times \Spec(k[\eps])$ which is constant when we consider only
      the deformation of $E_T$. This defines a tangent vector in the 
      Quot scheme in the point $[ E_T \to p^*E_1 ]$.
      The tangent space $T_{[ E_T \to p^*E_1 ]}$ is given by
      \[ T_{[ E_T \to p^*E_1 ]} = \Hom(E_0 \boxtimes \Ocal_\pdopi(-2),p^*E_1)
      = H^0(\pdopi,\Ocal_\pdopi(2)) \otimes \Hom(E_0,E_1) \, .\]
      Thus, the condition $ \Hom(E_0,E_1)=0$ forces the infinitesimal
      injectivity of $\rho$ which gives the assertion.
\end{proof}

\section{An example}\label{S7}
In this section $X$ denotes a curve of genus $g=3$. We study the coarse moduli space
$M=M^\frac{3}{2}_X(2,1,1,2)$
of semistable pairs $T=(E_1 \rarpa \phi E_0)$ where
\[ \rk(E_1)=2, \quad \deg(E_1)=1 \quad \rk(E_0)=1, \quad \deg(E_0)=2 \]
for $\alpha= \frac{3}{2}$.
Checking the few possible numerical types of sub triples we find that
\begin{proposition}\label{p71}
  For a holomorphic triple $T=(E_1 \rarpa \phi E_0)$ we have the following
  equivalence:
\[ T \mbox{ is (semi)stable} \iff \begin{array}{lrcll}
    (1) & \phi &  \ne& 0 & \mbox{, and} \\
    (2) & \deg(\ker(\phi))&(\leq)&0  & \mbox{, and} \\
    (3) & \mu_{\max }(E_1)&( \leq )&1 & .
  \end{array}
\]
\end{proposition}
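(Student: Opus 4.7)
The plan is to compute $\mu_\alpha(T)$, observe it vanishes, and then enumerate the possible numerical types $(r_1',r_0')$ of saturated subtriples $T'=(E_1'\to E_0')\subset T$, recording the constraint $\mu_\alpha(T')\le 0$ imposed by each. With $(r_1,r_0,d_1,d_0)=(2,1,1,2)$ and $\alpha=\tfrac{3}{2}$, the defining formula gives $\mu_\alpha(T)=(d_0+d_1-2\alpha r_0)/(r_0+r_1)=0$, so by Lemma \ref{lemmaET-T} the triple $T$ is $\alpha$-semistable iff $\mu_\alpha(T')\le 0$ for every subtriple, and $\alpha$-stable iff the inequality is strict for every proper subtriple. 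For destabilisation purposes I may restrict to $T'$ with both $E_1'\subset E_1$ and $E_0'\subset E_0$ saturated.

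Next I would run through the types $(r_1',r_0')\in\{0,1,2\}\times\{0,1\}$. Type $(0,1)$: $E_0'=E_0$ yields $\mu_\alpha(T')=-1<0$, which is vacuous. Type $(2,0)$: saturation forces $E_1'=E_1$ and the containment $\phi(E_1')\subset E_0'=0$ forces $\phi=0$, in which case $\mu_\alpha(T')=\tfrac{1}{2}>0$; hence this case destabilises precisely when $\phi=0$, encoding condition (1). Type $(1,0)$: here $E_1'\subset\ker\phi$ is a line subbundle, and assuming $\phi\ne 0$ the kernel has rank one and is already saturated, so the extremal choice $E_1'=\ker\phi$ gives $\mu_\alpha(T')=\deg(\ker\phi)$, encoding condition (2). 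Type $(1,1)$: necessarily $E_0'=E_0$ and $E_1'\subset E_1$ is any line subbundle (the containment $\phi(E_1')\subset E_0$ is automatic since $\rk E_0=1$); then $\mu_\alpha(T')=(d_1'-1)/2$, which maximised over line subbundles gives the constraint $\mu_{\max}(E_1)\le 1$, condition (3). Type $(2,1)$ with $E_1'=E_1$ is either the total triple ($\mu_\alpha=0$) or has $E_0'=\phi(E_1)\subsetneq E_0$ with $\deg E_0'\le 2$, which already forces $\mu_\alpha(T')\le 0$ automatically.

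Putting this together, conditions (1), (2), (3) are exactly the constraints arising from the five cases, so their conjunction is equivalent to $\alpha$-semistability; replacing $\le$ by $<$ throughout the same case analysis yields the parenthesised stable version. The main obstacle is pure bookkeeping: one must confirm exhaustivity of the five cases together with correct handling of saturations, and in particular verify that a proper rank-two subsheaf of $E_1$ or a strict image-subsheaf $\phi(E_1)\subsetneq E_0$ contribute no additional constraints beyond those already listed.
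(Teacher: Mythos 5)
Your case analysis is correct and is exactly the argument the paper intends: the text introduces the proposition with ``Checking the few possible numerical types of sub triples we find that\dots'' and omits the details, which you have supplied (the computation $\mu_\alpha(T)=0$, reduction to saturated subtriples via Lemma \ref{lemmaET-T}, and the enumeration of types $(r_1',r_0')$ yielding precisely conditions (1)--(3)). No discrepancy with the paper's approach.
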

We conclude two corollaries:
\begin{corollary}\label{c72}
  $T$ is semistable $\implies \Hom(E_0,E_1) =0 $.
\end{corollary}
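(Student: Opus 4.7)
The plan is to derive a contradiction from the combination of semistability (condition (3) of Proposition \ref{p71}) and the existence of a nonzero homomorphism $f : E_0 \to E_1$, by exhibiting a line subbundle of $E_1$ of too large a degree.

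First I would use that $E_0$ is a line bundle of degree $2$: since $E_0$ has rank one and is torsion free, any nonzero homomorphism $f : E_0 \to E_1$ is automatically injective, so the image $f(E_0) \subset E_1$ is a rank one subsheaf isomorphic to $E_0$, in particular of degree~$2$.

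Next I would pass to the saturation: let $L \subset E_1$ be the saturation of $f(E_0)$ in $E_1$. Then $L$ is a line subbundle of $E_1$, and since $f(E_0) \hookrightarrow L$ is an inclusion of rank one torsion free sheaves of the same rank, we have $\deg(L) \geq \deg(f(E_0)) = \deg(E_0) = 2$. In particular $\mu_{\max}(E_1) \geq \deg(L) \geq 2$.

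This contradicts condition (3) in Proposition \ref{p71}, which asserts $\mu_{\max}(E_1) \leq 1$ whenever $T$ is semistable. Hence no such nonzero $f$ can exist, giving $\Hom(E_0, E_1) = 0$. There is no genuine obstacle here; the only subtle point is remembering to saturate the image so that the resulting subobject of $E_1$ is a subbundle whose slope controls $\mu_{\max}(E_1)$.
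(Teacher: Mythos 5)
Your argument is correct and is exactly the intended deduction from Proposition \ref{p71}: a nonzero $f:E_0\to E_1$ is injective because $E_0$ is a line bundle and $E_1$ is torsion free, so its image is a rank-one subsheaf of degree $2$, forcing $\mu_{\max}(E_1)\geq 2$ and contradicting condition (3); the paper leaves this as an immediate consequence and gives no separate proof. (The saturation step is harmless but not needed, since $f(E_0)$ itself already witnesses $\mu_{\max}(E_1)\geq 2$.)
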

\begin{corollary}\label{c73}
$T$ is stable $\iff E_1$ is stable, and $\phi$ is surjective. Thus, any stable
triple $T$ is of type $\alpha^\bot$.
\end{corollary}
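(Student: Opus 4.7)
The plan is to use the three conditions from Proposition~\ref{p71} as the working definition of stability and translate them into statements about $E_1$ and $\phi$ using the rank and degree data $(r_1,r_0,d_1,d_0)=(2,1,1,2)$.

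For the direction $(\Rightarrow)$, assume $T$ is stable. Since $\mu(E_1)=\frac{1}{2}$, any line subbundle $L \subset E_1$ has $\deg(L)=\mu(L)$, and condition (3) gives $\deg(L) < 1$, hence $\deg(L) \leq 0 < \frac{1}{2}=\mu(E_1)$; therefore $E_1$ is stable. To see that $\phi$ is surjective, observe that $\im(\phi) \subset E_0$ is a torsion-free rank one subsheaf of the line bundle $E_0$, so $\im(\phi)=E_0(-D)$ for an effective divisor $D$. The short exact sequence
\[ 0 \to \ker(\phi) \to E_1 \to \im(\phi) \to 0 \]
yields $\deg(\ker(\phi)) = 1 - \deg(\im(\phi))$. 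If $\phi$ were not surjective then $\deg(\im(\phi)) \leq 1$, forcing $\deg(\ker(\phi)) \geq 0$, contradicting condition (2). Hence $\phi$ is surjective.

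For the direction $(\Leftarrow)$, assume $E_1$ is stable and $\phi$ is surjective. Then condition (1) is immediate. From the short exact sequence $0 \to \ker(\phi) \to E_1 \to E_0 \to 0$ we compute $\deg(\ker(\phi))=1-2=-1<0$, giving (2). The stability of $E_1$ together with $\mu(E_1)=\frac{1}{2}$ yields $\mu_{\max}(E_1)=\frac{1}{2}<1$, giving (3). Hence $T$ is stable by Proposition~\ref{p71}.

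For the final assertion, one checks directly that a stable $T$ satisfies the three defining conditions of type $\alpha^\bot$: surjectivity of $\phi$ has just been proved, the rank relation $\rk(E_1)=2=2\rk(E_0)$ is part of the chosen numerical type, and the slope condition $\mu(E_0)-\mu(E_1)=2-\frac{1}{2}=\frac{3}{2}=\alpha$ holds by construction. Thus any stable $T$ is of type $\alpha^\bot$. The argument has no real obstacle; the only point deserving mild care is the passage from $\mu_{\max}(E_1)<1$ to the strict stability inequality $\deg(L) < \mu(E_1)$, which works because the degree is integer-valued.
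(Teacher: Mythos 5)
Your proof is correct and follows the same route as the paper, which simply cites Proposition~\ref{p71} and the obvious numerical identities; you have merely filled in the (easy) details of how conditions (1)--(3) translate into stability of $E_1$ and surjectivity of $\phi$ for the type $(r_1,r_0,d_1,d_0)=(2,1,1,2)$. No gaps.
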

\begin{proof}
  The surjectivity and the stability of $E_1$ is a consequence of Proposition
  \ref{p71}. The equalities $2\rk(E_0)=\rk(E_1)$ and
  $\mu(E_0)-\mu(E_1)=\alpha$ are obvious.
\end{proof}
For a fixed triple
$ S=(F_1 \rarpa \psi F_0) \in M(k)$ we defined the corresponding
$\Theta$-divisor by
\[ \Theta_S:= \left\{ (E_1 \rarpa \phi E_0) \in M(k)  \left| H^* \left( \ker
    \left(
      \xymatrix{ E_1 \otimes F_0 \oplus E_0 \otimes F_1
    \ar[rr]^-{ \phi \otimes \id_{F_0} - \id_{E_0}\otimes \psi} && E_0 \otimes
  F_0 } \right. \right) \right) \ne
  0 \right\}
\]
In this special case we have the next
\begin{proposition}
  For any $S \in M(k)$ the generalized theta divisor $\Ocal_M(\Theta_S)$ is ample.
  The divisor $\Theta_S$ always contains the point $S$.
\end{proposition}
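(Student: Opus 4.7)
The plan is to verify both claims by reducing them to the general tools developed earlier in this section. For the ampleness assertion, I will apply Proposition~\ref{prop-ample} to $R=S$. Corollary~\ref{c73} states that every stable triple in $M$ is of type $\alpha^\bot$, so in particular $\psi$ is surjective, $\mu(F_0)-\mu(F_1)=\tfrac{3}{2}=\alpha$, and $\rk(F_1)=2\rk(F_0)$. A direct computation gives the common $\alpha$-slope $\mu_\alpha=(d_0+d_1-2\alpha r_0)/(r_0+r_1)=0$, so $\mu(F_0)+\mu_\alpha=2=g-1$, covering the remaining numerical condition of Proposition~\ref{prop-ample}. The hypothesis $\Hom(E_0,E_1)=0$ for every semistable triple $T\in M$ is precisely Corollary~\ref{c72}. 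Thus Proposition~\ref{prop-ample} applies and yields ampleness of $\Ocal_M(\Theta_S)$.

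For the incidence statement, I need to exhibit a nonzero class in $H^*(X,\ker\pi)$, where
\[ \pi\colon (F_1\otimes F_0)\oplus(F_0\otimes F_1)\rar F_0\otimes F_0,\quad (a,b)\mapsto -(\psi\otimes\id)(a)+(\id\otimes\psi)(b). \]
The key observation is that $F_0$ is a line bundle, so the flip $F_0\otimes F_0\to F_0\otimes F_0$ is the identity. Let $\sigma\colon F_1\otimes F_0\to F_0\otimes F_1$ denote the canonical swap. For a local section $a=f_1\otimes f_0$ one has $(\psi\otimes\id)(a)=\psi(f_1)\otimes f_0$ and $(\id\otimes\psi)(\sigma(a))=f_0\otimes\psi(f_1)$, and these agree in $F_0\otimes F_0$. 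Hence the antidiagonal $a\mapsto(a,\sigma(a))$ defines an injection $F_1\otimes F_0\hookrightarrow\ker\pi$. A Riemann--Roch calculation gives $\chi(F_1\otimes F_0)=\deg F_1+\rk(F_1)\deg F_0+\rk(F_1)(1-g)=1+4-4=1$, so $h^0(X,F_1\otimes F_0)\geq 1$ and consequently $h^0(X,\ker\pi)\geq 1$, which shows $S\in\Theta_S$.

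The only delicate step is the symmetry argument in the second part: the antidiagonal lands in $\ker\pi$ precisely because $\rk(F_0)=1$ forces the flip on $F_0\otimes F_0$ to be trivial. Once this is in place, both assertions reduce to invoking already-established machinery together with one Riemann--Roch count, so I expect no further obstacle.
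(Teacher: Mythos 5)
Your proof is correct and follows essentially the same route as the paper: ampleness is obtained by feeding Corollary \ref{c72} into Proposition \ref{prop-ample}, and the incidence $S\in\Theta_S$ is obtained from the anti-diagonal section $(e,\sigma(e))$ of $\ker\pi$ associated to a nonzero global section $e$ of $F_1\otimes F_0$, which exists because $\chi(F_1\otimes F_0)=1>0$ (the paper phrases this as $\mu(F_1\otimes F_0)=\tfrac{5}{2}>g-1$). Your explicit observation that the anti-diagonal lands in $\ker\pi$ because the flip on $F_0\otimes F_0$ is the identity when $\rk(F_0)=1$ is precisely the justification the paper leaves implicit.
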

\begin{proof}
  We showed in Proposition \ref{prop-ample} that Corollary \ref{c72} implies
  the ampleness of $\Theta_S$.

  The observation that the point $S=(F_1 \rarpa
  \psi F_0)$ is always
  contained in $\Theta_S$ follows from $H^0(G) \ne 0$ where $G$ is the kernel
  of the morphism
  \[ \xymatrix{F_1 \otimes F_0 \oplus F_0 \otimes F_1 \ar[rrr]^-{\psi \otimes
    \id_{F_0} - \id_{F_0} \otimes \psi} &&& F_0 \otimes F_0 \, . } \]
        We have $\mu(F_1 \otimes F_0) = \frac{5}{2}>g-1$. Therefore we have
        $H^0(F_1 \otimes F_0)$ has a positive dimension. Let $e \ne 0 $ be any
        element in $H^0(F_0 \otimes F_1)$. Let $\sigma: F_1 \otimes F_0
        \rarpa \sim F_0 \otimes F_1$ be the interchanging isomorphism. The
        element $(e, H^0(\sigma)(e))$ is a non trivial global section of $G$.

      \end{proof}

        Proposition \ref{thmB2} implies that $216 \cdot \Theta_S$
        is globally generated. However, this is far from a good bound.

\begin{lemma}\label{l-ss-or}
  If the tripe $T=(E_1 \to E_0)$ is semistable but not stable, then there
  exists a triple $S=(F_1 \to F_0) \in M(k) $ which is orthogonal to $T$.
\end{lemma}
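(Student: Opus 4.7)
My plan is to reduce to the Jordan-H\"older factors of $T$ via an additivity property of orthogonality, then produce $S\in M(k)$ orthogonal to each factor using classical theta-divisor considerations.

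First, since $T$ is semistable with $\mu_\alpha(T)=0$ but not stable, it admits a Jordan-H\"older filtration $0\to T'\to T\to T''\to 0$ with $T',T''$ stable of $\alpha$-slope zero. Inspecting the possible numerical types of sub-triples of slope zero (parallel to the proof of Proposition \ref{p71}) shows that, after relabelling, $T'=(M_1\to 0)$ with $M_1\in\Pic^0(X)$ and $T''=(L\to E_0)$ with $\deg L=1$, $\deg E_0=2$ and $\phi''$ the non-zero injection $L\hookrightarrow E_0$. Next, I would establish that orthogonality is additive: tensoring $0\to T'\to T\to T''\to 0$ with $S$ and applying the brutal truncation $\sigma_{<2}$ gives a short exact sequence of two-term complexes
\[
0\to \sigma_{<2}(T'\otimes S)\to \sigma_{<2}(T\otimes S)\to \sigma_{<2}(T''\otimes S)\to 0,
\]
and its long exact sequence in $H_1=\ker$ and $H_0=\coker$ shows that if $\pi_{T'}$ and $\pi_{T''}$ are both surjective then so is $\pi_T$, with an induced extension $0\to\ker\pi_{T'}\to\ker\pi_T\to\ker\pi_{T''}\to 0$; the required vanishing $H^*(X,\ker\pi_T)=0$ then propagates from the two ends. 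Hence any $S$ orthogonal to both $T'$ and $T''$ is orthogonal to $T$.

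It then suffices to find $S=(F_1\to F_0)\in M(k)$ outside both non-orthogonality loci $\Theta_{T'},\Theta_{T''}\subset M$. For $T'$ the map $\pi_{T'}\colon M_1\otimes F_0\to 0$ is trivially surjective and orthogonality reduces to $H^0(X,M_1\otimes F_0)=0$; since $\deg(M_1\otimes F_0)=g-1=2$, this fails precisely on the translate $\Theta-[M_1]\subset\Pic^2(X)$ of the classical theta divisor. The determinant map $M\to\Pic^2(X)$, $S\mapsto F_0$, is surjective (given any $F_0$, realise $F_1$ as a stable extension $0\to K\to F_1\to F_0\to 0$ for generic $K\in\Pic^{-1}(X)$), so $\Theta_{T'}$ is a proper divisor in $M$. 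For $T''$, setting $K=\ker\psi$, the orthogonality kernel fits into
\[
0\to E_0\otimes K\to \ker\pi_{T''}\to L\otimes F_0\to 0,
\]
a rank-two sheaf of slope $g-1$. Taking $K$ generic gives $H^0(E_0\otimes K)=0$, and one may then vary the extension class of $F_1$ in the $5$-dimensional space $\Ext^1(F_0,K)$ to arrange that the connecting map $H^0(L\otimes F_0)\to H^1(E_0\otimes K)$ is injective; this forces $H^0(\ker\pi_{T''})=0$ and shows $\Theta_{T''}$ is likewise a proper divisor. Any $S$ in the non-empty open complement $M\setminus(\Theta_{T'}\cup\Theta_{T''})$ then does the job.

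The main obstacle will be this last step for $T''$: exhibiting $S\in M$ for which the connecting morphism $H^0(L\otimes F_0)\to H^1(E_0\otimes K)$ is injective. This amounts to a Brill-Noether-type transversality argument in the $5$-dimensional family $\Ext^1(F_0,K)$, and requires unpacking how the extension class of $F_1$ induces the connecting map in cohomology.
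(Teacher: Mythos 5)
Your reduction is the same as the paper's: identify the two possible Jordan--H\"older factors $(M_1\to 0)$ and $(L\hookrightarrow E_0)$ of $\alpha$-slope zero, and observe (via the snake lemma applied to the termwise-exact sequence of two-term complexes) that any $S$ for which $\pi$ is surjective and acyclic against both factors is orthogonal to $T$ itself; this is exactly the paper's ``it is enough to find a triple $S$ which is orthogonal to $T_0$ and $T_1$''. Your treatment of the factor $(M_1\to 0)$ is also fine: orthogonality there is the classical condition $H^*(M_1\otimes F_0)=0$ for a line bundle of degree $g-1$, which fails only on a translate of the theta divisor in $\Pic^2(X)$.

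The genuine gap is the one you flag yourself, for the factor $T''=(L\to E_0)$. You reduce its orthogonality to the injectivity of the connecting map $H^0(L\otimes F_0)\to H^1(E_0\otimes K)$ of the extension $0\to E_0\otimes K\to\ker\pi_{T''}\to L\otimes F_0\to 0$, and you assert without proof that this can be arranged by varying the class of $F_1$ in $\Ext^1(F_0,K)$. That is not automatic: the connecting map is cup product with the image of the extension class $e\in\Ext^1(F_0,K)$ of $F_1$ under $\id_{E_0}\otimes(-)$ followed by pullback along $\phi''\otimes\id_{F_0}$, and one must show this composite pairs nontrivially with the one-dimensional space $H^0(L\otimes F_0)$ for some $e$ that simultaneously keeps $F_1$ stable and the other genericity conditions in force. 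The paper sidesteps this entirely by not insisting that $S$ be stable: it takes the strictly semistable point $S=S_0\oplus S_1$ with $S_0=(M_0\to 0)$ and $S_1=(M_2(-Q)\to M_2)$, built from the same two rank-one types. For such a decomposable $S$ every kernel that occurs is a line bundle of degree $g-1$ (namely $L_2\otimes M_0$, $L_0\otimes M_2$, and $L_2\otimes M_2(-P-Q)$, the last after noting $\pi$ is surjective once $P\ne Q$), so all required vanishings are the classical properness of the theta divisor in $\Pic^{g-1}(X)$, and no transversality of a connecting homomorphism is needed. Either carry out the cup-product computation you postpone, or switch to a decomposable $S$ as the paper does; as written, your proof is incomplete at exactly the step you call the main obstacle.
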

\begin{proof}
  If $T$ is not stable it contains a proper sub triple $T' $ with
  $\mu_\alpha(T')=0$. There are two such possibilities:
  Either the triple $T'=T_0=(L_0 \to 0) $ with $\deg(L_0)=0$, or we have $T'= T_1
  =(L_2(-Q) \to L_2)$ with $\deg(L_2)=2$ and $Q \in X(k)$.
  Since $T$ sits in a short exact sequence 
  \[ 0 \to T_i \to T \to T_{1-i} \to 0 \]
  with $i=0$ or $i=1$, it is enough to find a triple $S$ which is orthogonal to
  $T_0$ and $T_1$.
  Indeed, we can choose $S=S_0 \oplus S_1$ as follows:\\
  $S_0=(M_0 \to 0)$ with $M_0$ a degree zero line bundle. For any $M_0$ we have that
  $S_0$ is orthogonal to $T_0=(L_0 \to 0)$. $S_0$ is orthogonal to $T_1=(L_2(-P) \to L_2)$ iff
  $H^*(L_2 \otimes M_0) = 0$. Thus, for a general $M_0$ the triple $S_0$ is
  orthogonal to $T_0$ and $T_1$.\\
  $S_1=(M_2(-Q) \to M_2)$ with $M_2$ of degree two. We have that $(L_0 \to 0)$
  is orthogonal to $S_1$ iff $H^*(L_0 \otimes M_2)=0$, again an open condition
  for $M_2$. For the orthogonality of $S_1$ to $T_1$ we need two things: First
  of all the morphism $\pi: L_2(-P) \otimes M_2 \oplus L_2 \otimes M_2(-Q)
  \to L_2 \otimes M_2$ must be surjective. This is the case whenever $P \ne
  Q$. Under this condition, the kernel of $\pi$ is the line bundle $L_2 \otimes
  M_2(-P-Q)$. Again for a general $M_2$ we have $H^*(\ker(\pi))=0$.
\end{proof}

\begin{lemma}\label{l-st-or}
  If the tripe $T=(E_1 \rarpa \phi E_0)$ is stable, then there
  exists a triple $S=(F_1 \to F_0) \in M(k) $ which is orthogonal to $T$.
\end{lemma}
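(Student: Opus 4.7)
The plan is to construct $S = (F_1 \twoheadrightarrow L) \in M(k)$ explicitly: choose $L \in \Pic^2(X)$ and $K' \in \Pic^{-1}(X)$ sufficiently generic, and let $F_1$ be a non-split extension $0 \to K' \to F_1 \to L \to 0$ with class $\xi_0 \in \Ext^1(L, K')$ sufficiently generic. By Corollary \ref{c73} it suffices to arrange that $F_1$ is stable and that $T$ is orthogonal to $S$.

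Surjectivity of $\pi$ follows automatically from that of $\psi$, and a diagram chase produces the canonical short exact sequence $0 \to E_0 \otimes K' \to \ker(\pi) \to E_1 \otimes L \to 0$. Choosing $L$ and $K'$ so that $h^0(K \otimes L) = 0$ (with $K := \ker \phi$), $h^0(E_0 \otimes K') = 0$, and $h^1(E_1 \otimes L) = 0$ --- three open non-empty Brill--Noether conditions on $\Pic^2(X)$ and $\Pic^{-1}(X)$ --- Riemann--Roch forces $h^0(E_1 \otimes L) = h^1(E_0 \otimes K') = 1$, and the long exact cohomology sequence reduces $H^*(\ker \pi) = 0$ to the non-vanishing of the connecting map $\delta : H^0(E_1 \otimes L) \to H^1(E_0 \otimes K')$ between one-dimensional spaces. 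The extension class of $\ker(\pi)$ is the pullback of $\xi_0$ along $\phi \otimes \id_L$, so for the generator $s$ of $H^0(E_1 \otimes L)$ one has $\delta(s) = u \cup \xi_0$, where $u = (\phi \otimes \id_L)(s) \in H^0(E_0 \otimes L)$ is non-zero (image of an injection $H^0(E_1 \otimes L) \hookrightarrow H^0(E_0 \otimes L)$) and corresponds to an injection $L^{-1} \hookrightarrow E_0$ with torsion cokernel of length $4$. Tensoring that injection with $K'$ exhibits $H^1(L^{-1} \otimes K') \twoheadrightarrow H^1(E_0 \otimes K')$ as surjective, so $\xi_0 \mapsto \delta(s)$ is a non-zero linear form on the five-dimensional space $\Ext^1(L, K') = H^1(L^{-1} \otimes K')$, vanishing along a four-dimensional hyperplane.

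To finish, $F_1$ must be stable for the chosen $\xi_0$. Any destabilizing line subbundle of $F_1$ is necessarily of the form $L(-P)$, $P \in X$, and lifts to $F_1$ precisely when $\xi_0$ lies in the one-dimensional kernel of the restriction $\Ext^1(L, K') \to \Ext^1(L(-P), K')$; as $P$ varies over $X$, these kernels sweep a two-dimensional cone in $\Ext^1(L, K')$. Since this two-dimensional unstable cone and the four-dimensional ``$\delta = 0$'' hyperplane cannot together fill the five-dimensional $\Ext^1(L, K')$, a generic $\xi_0$ makes $F_1$ stable while $\delta \neq 0$; the resulting $S = (F_1 \to L) \in M(k)$ is then of type $\alpha^\bot$ and orthogonal to $T$. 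The main obstacle, though routine, is the compatibility check just outlined: it reduces to the dimension counts above and the surjectivity supplied by the torsion-quotient long exact sequence.
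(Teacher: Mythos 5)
Your proof is correct, but it takes a genuinely different route from the paper's. The paper constructs a \emph{decomposable}, strictly semistable orthogonal triple $S=S_0\oplus S_1$ with $S_0=(M_0\to 0)$ and $S_1=(M_2(-Q)\to M_2)$, checks orthogonality summand by summand, and thereby reduces everything to $h^0$-statements for line bundles tensored with $E_0$, $E_1$ and $\ker\phi$ together with a suitable choice of the point $Q$; the only nontrivial genericity input is $h^1(E_1\otimes M_2)=0$ for general $M_2\in\Pic^2(X)$, for which the paper invokes Raynaud. You instead produce a \emph{stable} $S$, taking $F_0=L$ and $F_1$ a generic non-split extension of $L$ by $K'\in\Pic^{-1}(X)$, and you isolate the single cohomological obstruction as the connecting map $\delta$ between one-dimensional spaces, computed as a cup product of $\xi_0\in\Ext^1(L,K')$ with the section $u=(\phi\otimes\id_L)(s)$; your surjectivity argument via the torsion cokernel of $L^{-1}\hookrightarrow E_0$, the resulting $4$-dimensional hyperplane $\{\delta=0\}$, and the $2$-dimensional unstable cone swept out by the kernels of $\Ext^1(L,K')\to\Ext^1(L(-P),K')$ all check out, and the three Brill--Noether conditions you impose are indeed open and non-empty (the condition $h^1(E_1\otimes L)=0$ requires the same Raynaud-type argument as the paper's condition (3), so you are on equal footing there, though you should say a word about it). What each approach buys: the paper's computation is more elementary and parallels its treatment of the strictly semistable case in Lemma \ref{l-ss-or}, while yours yields the slightly stronger conclusion that $T$ is orthogonal to an honest \emph{stable} point of $M$, at the cost of the extension-class and dimension-count bookkeeping.
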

\begin{proof}
  By Corollary \ref{c73} we know that $E_1$ is stable and $\phi$ is surjective.
  Our strategy will be similar to the above proof. We show that $T$ is
  orthogonal to a direct sum $S=S_0 \oplus S_1$. As before we set $S_0=(M_0
  \to 0)$ and $S_1=(M_2(-Q) \to M_2)$.\\
  The orthogonality of $T$ to $S_0$ is equivalent to the vanishing of $H^*(M_0
  \otimes E_0)$ which is true for a general $M_0$.\\
  Next we look for a triple $S_1$ which is orthogonal to $T$.
  We choose $M_2$ such that:\\
  (1) $h^0(M_2 \otimes E_0) = \chi(M_2 \otimes E_0)=2$,\\
  (2) $h^0(M_2 \otimes \ker(\phi))=0$, and\\
  (3) $h^0(M_2 \otimes E_1) = \chi(M_2 \otimes E_1)=1$.\\
  The first two conditions are obviously satisfied for a general $M_2$, and
  are open conditions on $\Pic^2(X)$. For condition (3) we use any surjection
  $E \rarpa \pi  k(P)$. The kernel of $\pi$ is semistable of rank two and degree
  zero. By Raynaud's result in \cite{Ray} for a general $M_2$ we have $H^*(M_2 \otimes
  \ker(\pi))=0$. This implies (3).
  For a line bundle $M_2$ satisfying (1)--(3) we have that 
  the morphism $\sigma: H^0(M_2 \otimes E_1) \to  H^0(M_2 \otimes E_0)$ is
  injective with image spanned by a global section $s \in  H^0(M_2 \otimes
  E_0)$. Now we take a point $Q \in X(k)$ such that $s$ is not in the image
  of $H^0(M_2(-Q) \otimes E_0)  \to H^0((M_2 \otimes E_0)$.
  It follows that the morphism 
  \[ H^0(M_2 \otimes E_1) \oplus H^0(M_2(-Q) \otimes E_0)  \to H^0((M_2
  \otimes E_0) \]
  is an isomorphism. Thus, $S_1=(M(-Q) \to M)$ is orthogonal to $T$.
\end{proof}

\end{document}